\newtheorem{theorem}{Theorem}[section]
\newtheorem{proposition}[theorem]{Proposition}
\newtheorem{remark}[theorem]{Remark}
\newtheorem{example}[theorem]{Example}
\newtheorem{corollary}[theorem]{Corollary}
\newtheorem{lemma}[theorem]{Lemma}
\def\R{\mathbb R} \def\Z{\mathbb Z} 
\def\N{\mathbb N}
\def\Q{\mathbb Q}
\def\<{\,<\!}
\def\>{\!>\,}
\def\R{\mathbb{R}}
\def\F{\mathbb{F}}
\def\E{\mathbb{E}}
\def\N{\mathbb{N}}
\def\Ord{{\rm Ord}}
\def \Z {\mathbb{Z}}
\def \R {\mathbb{R}}
\begin{document}

\title{Group actions and power maps for groups over non-Archimedean local fields}

\author{Arunava Mandal and C.R.E. Raja}

 \maketitle

 

\begin{abstract}
We consider linear groups and Lie groups over a non-Archimedean local field $\F$ for which the power map $x\mapsto x^k$ has a dense image or it is surjective.  We prove that the group of $\F$-points of such algebraic groups is a compact extension of unipotent groups with the order of the compact group being relatively prime to $k$.  This in particular shows that the power map is surjective for all $k$ is possible only when the group is unipotent or trivial depending on whether the characteristic of $\F$ is zero or positive.  Similar results are proved for Lie groups via the adjoint representation.  To a large extent, these results are extended to linear groups over local fields and global fields.

\end{abstract}


\section{Introduction}
We will be considering existence of the solution to the equation
$$x^k =g, ~~k >1 \eqno(1)$$ in a group $G$ for every $g\in G$.
Our aim is to find structural conditions that are equivalent to
equation (1) having a solution in $G$ for every $g\in G$.  We first
note that equation (1) having a solution in $G$ for every $g\in G$
is equivalent to the corresponding power map $P_k\colon G \to G$
defined by $P_k(g) = g^k$ being surjective.  Thus, we pay attention
surjectivity of the power maps. For a (topological) group $G$ and
$k\geq 1$, we say that $P_k$ is surjective on $G$ (resp. dense in
$G$) if $P_k\colon G \to G$ defined by $P_k(x)=x^k$ for all $x\in G$
is surjective (resp. has dense image).

Surjectivity of power maps have been studied for algebraic groups
over local fields and algebraically closed fields of characteristic
zero in \cite{Ch1}-\cite{Ch3} and for semisimple algebraic groups
over algebraically closed fields in \cite{St}: see also the
references cited therein \cite{Ch3}. Certain class of solvable (not
necessarily algebraic) group is considered in \cite{DaM}.

Density of the power map has also been considered for connected Lie
groups in \cite{BhM}, \cite{Ma1}, \cite{MaS} and for real algebraic
groups in \cite{Ma2}.

We consider groups that have linear representation over
non-Archimedean local fields and our approach involves linear
dynamics and tidy subgroups: see \cite{CoG} for linear dynamics and
\cite{Wi0} for tidy subgroups.

Let $\F$ be a field, and let $G$ be a smooth affine $\F$-group. Then
the $\F$-unipotent radical $R_{u,\F}(G)$ of $G$ is defined to be the
largest smooth connected normal unipotent $\F$-subgroup of $G$.  We
say that $G$ is pseudo-reductive $\F$-group if $R_{u,\F }(G) =\{e\}$
where $e$ is the identity element of $G$.  An $\F$-split unipotent
radical $R_{us, \F}(G)$ is a maximal smooth connected $\F$-split
unipotent normal subgroup of $G$ and $G$ is said to be
quasi-reductive if $R_{us,\F}(G)=\{e\}$.  Note that
$R_{us,\F}(G)\subset R_{u, \F}(G)$. We equip $G(\F )$ with topology
inherited from $\F$ when $\F$ is a local field.  If $\F$ is a
non-Archimedean local field, then $G(\F)$ is totally disconnected
locally compact group.









It is well-known that if $G$ is a smooth connected affine $\F$-group
then $G/R_{u,\F}(G)$ is a pseudo-reductive $\F$-group  and
$G/R_{us,F}(G)$ is a quasi-reductive $\F$- group (see \cite{CGP}
Corollary B.3.5).


We first prove surjectivity and density are equivalent for $G(\F )$
and characterize the surjectivity/density of the $k$-th power map
$P_k$ for $G(\F )$ in terms of the $\F$-points of the
quasi-reductive quotient group $G/R_{us,\F}(G)$.  We recall that the
canonical map from $G(\F)/R_{us,\F}(G)(\F)$ into $
(G/R_{us,\F}(G))(\F) $ is an isomorphism as $H^1(\F, R_{us,\F}(G))=0$.

\begin{theorem}\label{T2}
Let $\F$ be a non-Archimedean local field. Let $G$ be an $\F$-group,
and $R_{us,\F}(G)$ be the $\F$-split unipotent radical of $G$.
Suppose that the characteristic of $\F$ does not divide $k$.  Then
the following are equivalent:
    \begin{enumerate}
    \item[{$(a)$}] $P_k$ is dense in $G(\F)$;
    \item[{$(b)$}] $G(\F)/R_{us, \F}(G)(\mathbb F)$ is compact and
    $P_k$ is surjective on $G(\F)/R_{us,\F}(G)(\F)$;

    \item[$(c)$] $P_k$ is surjective in $G(\F )$.
    \end{enumerate}

Suppose the residual characteristic of $\F$ divides $k$.  Then
density of $P_k$ on $G(\F )$ implies that $G(\F )$ is a finite extension
of a split unipotent group.  In addition if characteristic of $\F$
is positive, then $G(\F )$ is finite.
\end{theorem}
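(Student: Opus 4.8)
The plan is to reduce everything to the quasi-reductive quotient $\overline{G}:=G/R_{us,\F}(G)$, for which $G(\F)/R_{us,\F}(G)(\F)\cong\overline{G}(\F)$, and then to separate the two characteristic cases. First I would record that density of $P_k$ on $G(\F)$ forces $\overline{G}(\F)$ to be compact. Since the quotient map $G(\F)\to\overline{G}(\F)$ is continuous, open and surjective, $P_k$ has dense image on $\overline{G}(\F)$; if $\overline{G}(\F)$ were non-compact, the linear-dynamics/tidy-subgroup analysis underlying the equivalences above produces a continuous surjection of $\overline{G}(\F)$ onto an unbounded subgroup of $(\F^{*},\cdot)$ or of $(\Z,+)$ (a valuation or cocharacter direction), on which $P_k$ becomes multiplication by $k$ with image $k\Z$, which is closed and proper, contradicting density. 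This step uses no hypothesis relating $k$ to the characteristic, so it is available here. As $\overline{G}(\F)$ is then compact, totally disconnected and $P_k$ is continuous, its image is closed, and density upgrades to surjectivity of $P_k$ on $\overline{G}(\F)$.

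When $\mathrm{char}(\F)=0$, the hypothesis that $\mathrm{char}(\F)$ does not divide $k$ holds trivially, so the first part of Theorem~\ref{T2} applies and gives, together with the structure already obtained, that the compact group $\overline{G}(\F)$ has (supernatural) order prime to $k$. Since $p\mid k$, this order is prime to $p$. But a compact $\F$-analytic group of positive dimension contains an open uniform pro-$p$ subgroup and hence has order divisible by $p^{\infty}$; therefore $\dim\overline{G}=0$ and $\overline{G}(\F)$ is finite. Consequently $G(\F)$ is an extension of the finite group $\overline{G}(\F)$ by the split unipotent group $R_{us,\F}(G)(\F)$, i.e. a finite extension of a split unipotent group.

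When $\mathrm{char}(\F)=p>0$ I cannot cite the first part of the theorem, and the argument has to be made by hand; this is where the main difficulty lies. The key tool is the identity $(1+X)^{p^{a}}=1+X^{p^{a}}$, valid for matrices in characteristic $p$ because $1$ is central and the intermediate binomial coefficients vanish. Write $k=p^{a}m$ with $\gcd(m,p)=1$. Realising the compact group $\overline{G}(\F)$ inside some $\GL_{n}(\F)$, it lies up to conjugacy in $\GL_{n}(\mathcal{O})$, and its congruence subgroup $C=\overline{G}(\F)\cap(1+\mathfrak{m}\,M_{n}(\mathcal{O}))$ is open, normal of finite index and pro-$p$. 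On $C$ the map $P_{m}$ is a homeomorphism (as $\gcd(m,p)=1$), while $P_{p^{a}}$ sends $1+X$ to $1+X^{p^{a}}$, which multiplies valuations by $p^{a}$; since $C$ is infinite-dimensional over $\F_{p}$ whenever $\dim\overline{G}>0$, this Frobenius-type map has image of infinite index in $C$. A coset count over the finite group $\overline{G}(\F)/C$ then shows that $P_{k}(\overline{G}(\F))$ cannot cover $C$, contradicting surjectivity; hence $\dim\overline{G}=0$ and $\overline{G}(\F)$ is finite, giving once more that $G(\F)$ is a finite extension of the split unipotent group $R_{us,\F}(G)(\F)$.

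To obtain the stronger conclusion that $G(\F)$ is finite in characteristic $p$, it remains to kill the split unipotent radical. Here I would use that a nontrivial continuous homomorphic image of $G(\F)$ which is an elementary abelian $p$-group, for instance a copy of $\mathbb{G}_{a}(\F)=(\F,+)$, is impossible: on such an image $P_{k}$ is multiplication by $k\equiv 0$, so its image is trivial and cannot be dense. If $R_{us,\F}(G)\neq\{e\}$, a composition factor $\cong\mathbb{G}_{a}$ of this split unipotent normal subgroup, made $\overline{G}(\F)$-equivariant using that $\overline{G}(\F)$ is finite of order prime to $p$ (so that averaging splits off an invariant additive quotient), produces exactly such a forbidden image. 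Hence $R_{us,\F}(G)=\{e\}$ and $G(\F)=\overline{G}(\F)$ is finite. The main obstacle throughout is the compact, positive-dimensional, possibly non-commutative case in characteristic $p$: ruling out surjectivity of $P_{k}$ there is precisely what the congruence-subgroup Frobenius computation accomplishes, and producing the $\overline{G}(\F)$-equivariant additive quotient is the delicate point in the final reduction.
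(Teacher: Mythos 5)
Your proposal has three genuine gaps. The most serious is that the implication $(b)\Rightarrow(c)$ is never addressed: after upgrading density to surjectivity on the compact quotient $G(\F)/R_{us,\F}(G)(\F)$, you must still lift surjectivity of $P_k$ back through the split unipotent radical to $G(\F)$ itself, and this is precisely where the hypothesis that the characteristic of $\F$ does not divide $k$ enters. The paper does this via Lemma~\ref{cn}, which rests on the solvable-group result (Fact~1, from \cite{DaM}) allowing one to solve $y^k=xu$ as $u$ ranges over the $\F$-nilpotent normal subgroup $R_{us,\F}(G)(\F)$; nothing in your write-up performs this lift. Second, your mechanism for compactness of $\overline{G}(\F)$ is not valid: a non-compact group of $\F$-points of a quasi-reductive group need not admit any continuous surjection onto $(\Z,+)$ or an unbounded subgroup of $(\F^*,\cdot)$ --- $\SL_2(\Q_p)$ is non-compact and, being perfect, has no nontrivial abelian quotient. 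The paper's route is different: density of $P_k$ forces the scale function to be trivial on every element (Lemma~\ref{ga}), which places the group inside $KU$ with $K$ compact and $U$ split unipotent (Lemma~\ref{lus}); quasi-reductivity then trivializes the intersection with $U$, so $\overline{G}(\F)$ embeds as a closed subgroup of the compact group $K$ (Lemma~\ref{q}).

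Third, in positive characteristic your final reduction --- extracting a $\overline{G}(\F)$-equivariant $\mathbb{G}_a$-quotient of $G(\F)$ by averaging --- breaks down whenever the finite group $\overline{G}(\F)$ acts on the relevant abelian layer of $R_{us,\F}(G)(\F)$ with zero coinvariants: for $G=\mu_\ell\ltimes\mathbb{G}_a$ with $\mu_\ell$ acting through a nontrivial character, $G(\F)$ has no nontrivial elementary abelian $p$-quotient at all, yet $P_p$ is indeed not dense there; your test simply cannot detect this. The paper instead shows (Proposition~\ref{nst}, parts (4) and (6)) that density of $P_k$ on $L=G(\F)$ passes down to the open subgroup $L\cap U$, and that a suitable iterate $P_{k^j}$ annihilates the split unipotent group $U$ in characteristic $p$, forcing $L\cap U$ to be trivial. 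A smaller point: your congruence-subgroup/Frobenius computation for finiteness of $\overline{G}(\F)$ in characteristic $p$ is both incomplete (the coset count over $\overline{G}(\F)/C$ is asserted but not carried out, and is not routine since $P_k$ does not respect cosets) and unnecessary --- the argument you already use in characteristic $0$, namely that surjectivity of $P_k$ on a profinite group forces $k$ to be coprime to its order (Proposition~\ref{F1}) while an open pro-$p$ subgroup contributes $p$-power order, applies verbatim whenever the residual characteristic divides $k$, with no case distinction on the characteristic of $\F$.
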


If the field $\F$ is perfect then for an smooth connected $\F$-group
$G$, $R_{us,\F}(G)= R_{u,\F}(G)=R_u(G)$, the unipotent radical of
$G$, and $G/R_u(G)$ is a reductive $\F$-group. The following
Corollary is an immediate consequence of Theorem \ref{T2}
generalizes Theorem 1.2 of \cite{Ch2} - characteristic zero case:
refer Section 2.2 for details on order of compact groups.

\begin{corollary}
Let $\F$ be non-Archimedean perfect field and $G$ be an algebraic
group over $\F$.  Suppose that the characteristic of $\F$ does not
divide $k$.  Then $P_k$ is dense in $G(\F )$ if and only if $P_k$ is
dense in $(G/R_u(G))(\F )$ if and only if $(G/R_u(G))(\F )$ is
compact and $k$ is co-prime to the order of $(G/R_u(G))(\F )$.
\end{corollary}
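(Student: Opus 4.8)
The plan is to deduce the Corollary from Theorem~\ref{T2} by exploiting the extra structure available over a perfect field. First I would invoke the hypothesis that $\F$ is perfect to obtain $R_{us,\F}(G) = R_{u,\F}(G) = R_u(G)$, the ordinary unipotent radical, so that the quasi-reductive quotient appearing in Theorem~\ref{T2} coincides with the reductive quotient $G/R_u(G)$. This lets me rewrite condition $(b)$ of Theorem~\ref{T2} with $R_{us,\F}(G)$ replaced by $R_u(G)$, and using the stated isomorphism $G(\F)/R_u(G)(\F) \cong (G/R_u(G))(\F)$ (coming from $H^1(\F, R_u(G)) = 0$), I can phrase everything in terms of the honest algebraic quotient group $(G/R_u(G))(\F)$.

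Next I would establish the first equivalence, that $P_k$ is dense in $G(\F)$ if and only if $P_k$ is dense in $(G/R_u(G))(\F)$. The forward direction is essentially immediate: the quotient map $G(\F) \to (G/R_u(G))(\F)$ is a continuous surjective homomorphism, and continuous homomorphisms push dense images of power maps forward, since $P_k$ commutes with homomorphisms. For the reverse direction I would apply Theorem~\ref{T2} to the reductive group $G/R_u(G)$ itself: a reductive group has trivial split unipotent radical, so condition $(b)$ for $G/R_u(G)$ reduces to $(G/R_u(G))(\F)$ being compact with $P_k$ surjective on it, and by the equivalence $(a)\Leftrightarrow(b)$ this is exactly density of $P_k$ on $(G/R_u(G))(\F)$; then Theorem~\ref{T2} applied to $G$ gives density on $G(\F)$.

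For the final characterization I would connect density of $P_k$ on the compact group $(G/R_u(G))(\F)$ with the coprimality condition. On a compact (totally disconnected) group $K$, surjectivity of the power map $P_k$ is governed by the order of $K$ in the sense defined in Section~2.2; the key point is that $P_k$ is surjective (equivalently dense, by $(a)\Leftrightarrow(c)$) precisely when $k$ is coprime to the order of $K$. I would cite the Section~2.2 machinery on orders of compact groups to convert ``$P_k$ surjective on $(G/R_u(G))(\F)$'' into ``$k$ coprime to the order of $(G/R_u(G))(\F)$,'' thereby matching the third clause of the Corollary.

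The main obstacle I anticipate is bookkeeping around the compactness clause rather than any deep new argument: I must ensure that ``$(G/R_u(G))(\F)$ compact and $P_k$ surjective on it'' is genuinely equivalent to ``$(G/R_u(G))(\F)$ compact and $k$ coprime to its order,'' which requires the precise definition of the order of a compact group from Section~2.2 and the fact that the power map on a compact abelian-by-finite or profinite group is surjective exactly under the coprimality condition. Since every step otherwise reduces to a direct application of Theorem~\ref{T2} to $G$ and to its reductive quotient, together with functoriality of power maps under the quotient homomorphism, I expect the proof to be short once the order-theoretic equivalence is cited.
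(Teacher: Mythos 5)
Your proposal is correct and takes essentially the same route the paper intends: the paper declares the corollary an immediate consequence of Theorem \ref{T2}, relying precisely on the identification $R_{us,\F}(G)=R_{u,\F}(G)=R_u(G)$ over a perfect field, the vanishing of $H^1(\F,R_u(G))$ to identify the quotients, and Proposition \ref{F1} from Section 2.2 to convert surjectivity of $P_k$ on the compact (hence profinite, being totally disconnected) group $(G/R_u(G))(\F)$ into coprimality of $k$ with its order. No gaps.
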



Next we consider inheritance of surjectivity of $P_k$ for algebraic
groups: recall that not all (even closed) subgroups inherit
surjectivity or density of $P_k$, e.g., $\Z _p$ (resp. $\Z $) is a
closed subgroup of $\Q _p$ (resp. $\R$) and $P_k$ is surjective on
$\Q _p$ (resp. $\R$) for all $k\geq 1$ but $P_k$ is not even dense
in $\Z _p$ (resp. $\Z $) for any $k$ divisible by $p$ (resp. for any
$k>1$):  compare with Corollary 1.3 of \cite{Ch2}.

\begin{theorem}\label{subgroup}
Let $\F$ be a non-Archimedean local field and $G$ be an algebraic
group defined over $\F$.  Then we have the following:

\begin{enumerate}
\item[(1)] If $P_k$ is surjective on $G(\F)$ and $H$ is an algebraic subgroup
of $G$ defined over $\F$, then $P_k$ is surjective on $H(\F)$.

\item[(2)] If $H$ is a closed (not necessarily algebraic) normal subgroup in
$G(\F)$ and $P_k$ is dense in $H$ as well as in $G(\F )/H$, then
$P_k$ is surjective on $G(\F )$.

\end{enumerate}

In particular, for any algebraic normal subgroup $H$ of $G$ defined
over $\F$, $P_k$ is surjective on $G(\F )$ if and only if $P_k$ is
surjective on both $H(\F )$ and $G(\F )/H(\F )$.

\end{theorem}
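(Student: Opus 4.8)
The overall plan is to derive both statements from the characterisation in Theorem~\ref{T2}, which converts the (a~priori rigid) property ``$P_k$ surjective'' into two properties that transfer well along subgroups and extensions: being a compact extension of a split unipotent group, with the compact part of order prime to $k$ (in the sense of Section~2.2). Throughout, the split unipotent directions behave well because $P_k$ is surjective on the $\F$-points of a split unipotent group whenever $k\in\F^\times$ (i.e. whenever the characteristic of $\F$ does not divide $k$), while the compact directions are profinite, where surjectivity of $P_k$ is equivalent to the order being prime to $k$.

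For part (1), assume first that the characteristic of $\F$ does not divide $k$, which is the main case. I would verify condition $(b)$ of Theorem~\ref{T2} for the algebraic subgroup $H$ and then read off $(c)$ for $H$. Mapping $H(\F)$ into $G(\F)/R_{us,\F}(G)(\F)$, the image is a closed subgroup of a compact group, hence compact, with kernel $H(\F)\cap R_{us,\F}(G)(\F)$; by Theorem~\ref{T2}$(b)$ applied to $G$ together with Section~2.2, this compact group has order prime to $k$, and a closed subgroup of it again has order prime to $k$. Using the standard compatibility $H(\F)\cap R_{us,\F}(G)(\F)\subseteq R_{us,\F}(H)(\F)$, the quasi-reductive quotient $H(\F)/R_{us,\F}(H)(\F)$ is a further (continuous) quotient of this compact group, hence is itself compact of order prime to $k$, so $P_k$ is bijective on it. This is exactly condition $(b)$ for $H$, and Theorem~\ref{T2} gives $(c)$. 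When the residual characteristic divides $k$ I would instead invoke the structural conclusion of Theorem~\ref{T2}: surjectivity (hence density) forces $G(\F)$ to be a finite extension of a split unipotent group, a description inherited by $H(\F)$; in characteristic zero $P_k$ is automatically surjective on the split unipotent part while the finite part has order prime to $k$, and in positive characteristic $G(\F)$ is finite, where the claim reduces to the elementary fact that $P_k$ is bijective on a finite group of order prime to $k$ and on its subgroups.

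For part (2), the route is to prove that $P_k$ has dense image in $G(\F)$ and then upgrade this to surjectivity by the equivalence $(a)\Leftrightarrow(c)$ of Theorem~\ref{T2}. Writing $\pi\colon G(\F)\to G(\F)/H$, two observations start the argument: since $P_k$ is equivariant under conjugation, $\overline{P_k(G(\F))}$ is a closed conjugation-invariant set containing $\overline{P_k(H)}=H$; and $\pi(P_k(G(\F)))=P_k(G(\F)/H)$ is dense. Thus for $g\in G(\F)$ and any neighbourhood $W$ of $g$ there is $x$ with $\pi(x^k)\in\pi(W)$, i.e. $x^k=wh$ with $w\in W$ and $h\in H$. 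It remains to absorb $h$: replacing $x$ by $xn$ with $n\in H$ fixes the image in $G(\F)/H$ and, by
$$(xn)^k=x^k\prod_{j=0}^{k-1}x^{-j}\,n\,x^{j},$$
changes $x^k$ by the twisted power $\phi_x(n)=\prod_{j=0}^{k-1}x^{-j}nx^{j}\in H$. Hence it suffices that $\phi_x(H)$ be dense in $H$, for then $h^{-1}$ can be approximated and $(xn)^k$ driven into $W$. This density is the main obstacle: it is \emph{not} a formal consequence of density of $P_k$ on $H$ alone (if $\Ad(x)$ acted on $H$ as an involution with $k=2$, then $\phi_x$ would be trivial). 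The point is that density of $P_k$ on $G(\F)/H$ constrains the eigenvalues of $\Ad(x)$; passing to the logarithm on the unipotent directions and treating the compact directions separately, $\phi_x$ linearises to $1+\Ad(x)+\cdots+\Ad(x)^{k-1}$, whose image is dense precisely when $\Ad(x)$ has no eigenvalue obstructing $k$-divisibility, which is exactly the configuration excluded by density of $P_k$ in the quotient. This is where the linear-dynamics input is essential.

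Finally, the displayed consequence follows by combining the two parts. If $P_k$ is surjective on $G(\F)$, then part (1) gives surjectivity on $H(\F)$, and surjectivity descends trivially to the topological quotient $G(\F)/H(\F)$. Conversely, surjectivity on $H(\F)$ and on $G(\F)/H(\F)$ implies density on both, so part (2), applied to the closed normal subgroup $H(\F)$, yields surjectivity on $G(\F)$.
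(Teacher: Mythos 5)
Your part (1) is essentially the paper's argument: push $H(\F)$ into the compact quotient $G(\F)/R_{us,\F}(G)(\F)$, use $H(\F)\cap R_{us,\F}(G)(\F)\subseteq R_{us,\F}(H)(\F)$ to see that $H(\F)/R_{us,\F}(H)(\F)$ is a compact group of order prime to $k$, and close with Theorem~\ref{T2}; that part is fine.

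Part (2) has a genuine gap at exactly the step you flag as ``the main obstacle''. Your plan is to pick any $x$ with $x^k=wh$ ($w$ near $g$, $h\in H$) and then absorb $h$ by replacing $x$ with $xn$, which requires the twisted power map $\phi_x(n)=\prod_j x^{-j}nx^{j}$ to have dense image in $H$. As your own involution example shows, this can fail for a particular $x$, and the proposed rescue --- that density of $P_k$ in $G(\F)/H$ ``excludes the configuration'' in which $1+\Ad(x)+\cdots+\Ad(x)^{k-1}$ is degenerate --- is not correct: density of $P_k$ in the quotient is a statement about the quotient group and puts no constraint on how the particular approximate root $x$ you extracted acts on $H$; $\Ad(x)$ restricted to $H$ can perfectly well have nontrivial $k$-th roots of unity as eigenvalues. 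The correct mechanism is the opposite one: you must \emph{choose} the root in the quotient rather than correct an arbitrary one. The paper first proves that the quasi-reductive quotient $M=G(\F)/N(\F)$, $N=R_{us,\F}(G)$, is compact --- via Lemma~\ref{q} applied to $M_1=\overline{HN(\F)/N(\F)}$ and then Lemma~\ref{ga}(2) applied to the pair $(M,M_1)$, which is precisely why Lemma~\ref{ga} carries a compact normal subgroup in its hypotheses --- deduces from Lemma~\ref{profinite} that $P_k$ is genuinely surjective (not merely dense) on $M$, and then invokes Lemma~\ref{cn}: for $a=xN$ one takes the $k$-th root $b$ inside $\overline{\langle a\rangle}$, which automatically fixes everything $a$ fixes on the graded pieces of $N$, i.e.\ lies in the set $B^*$ of Fact~1, and Fact~1 then produces an exact root of $xu$. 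Your sketch contains neither the compactness of $M$ nor any device for selecting a good root, so the density of $\phi_x(H)$ in $H$ remains unproved. You also omit the case where the characteristic of $\F$ divides $k$ in part (2) (settled in the paper by Proposition~\ref{nst}(6)), and your closing appeal to Theorem~\ref{T2} $(a)\Rightarrow(c)$ is only available when the characteristic does not divide $k$.
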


We have the following corollary regarding infinitely divisible
algebraic groups over non-Archimedean local fields.

\begin{corollary}\label{ida}
Let $G$ be an algebraic group over a non-Archimedean local field.
Suppose $P_k$ is surjective on $ G(\F)$ for all $k\in\N$.  Then
$G(\F )$ is unipotent.  In addition if characteristic of $\F$ is
positive, then $G(\F )=\{ e \}$.
\end{corollary}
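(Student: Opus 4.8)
The plan is to deduce Corollary~\ref{ida} directly from Theorem~\ref{T2}, using the hypothesis that $P_k$ is surjective (hence in particular dense) on $G(\F)$ for \emph{every} $k\in\N$. The key observation is that surjectivity for all $k$ forces us to consider, for each prime $p$, a value of $k$ divisible by the residual characteristic of $\F$, so that the second half of Theorem~\ref{T2} applies and pins down the structure very tightly.

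First I would fix the residual characteristic $p$ of $\F$ and apply Theorem~\ref{T2} to the specific choice $k=p$ (or any $k$ divisible by $p$). Since $P_k$ is assumed surjective on $G(\F)$ for all $k$, in particular $P_p$ is surjective, hence dense, on $G(\F)$. Because $p$ is the residual characteristic and $p\mid k$, the second paragraph of Theorem~\ref{T2} tells us that $G(\F)$ is a finite extension of a split unipotent group; write $U=R_{us,\F}(G)(\F)$ for the normal split unipotent part, so that $F:=G(\F)/U$ is finite. The goal is now to show that this finite quotient is in fact trivial, which will give $G(\F)=U$, i.e.\ that $G(\F)$ is unipotent.

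Next I would run the other half of the hypothesis against the finite quotient. Surjectivity of $P_k$ on $G(\F)$ descends to surjectivity of $P_k$ on the quotient $F=G(\F)/U$ for every $k$ (the power map commutes with quotient homomorphisms). But for a finite group $F$, the power map $P_k$ is surjective for all $k$ only if $F$ is trivial: choosing $k=|F|!$, or more simply any $k$ that is a common multiple of the orders of all elements of $F$, collapses $P_k$ to the trivial map $x\mapsto e$, whose image is $\{e\}$; surjectivity then forces $|F|=1$. Hence $F$ is trivial, so $G(\F)=U$ is split unipotent; in particular $G(\F)$ is unipotent, which is the first assertion.

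For the additional statement in positive characteristic I would invoke the final clause of Theorem~\ref{T2}: when $\mathrm{char}(\F)>0$ and the residual characteristic divides $k$, density of $P_k$ forces $G(\F)$ to be \emph{finite}. Combining this with the previous paragraph — $G(\F)$ is both finite and unipotent, and over a field of positive characteristic a nontrivial unipotent group has no finite nonzero quotient structure compatible with surjectivity of all power maps — gives $G(\F)=\{e\}$; alternatively, one notes directly that a finite group on which every $P_k$ is surjective must be trivial, as above. I expect the only subtle point to be the bookkeeping of \emph{which} $k$ to feed into each half of Theorem~\ref{T2}: one needs a single value of $k$ that is simultaneously divisible by the residual characteristic (to trigger the structural conclusion) while still exploiting surjectivity for arbitrarily large $k$ to kill the finite part, but since the hypothesis grants surjectivity for \emph{all} $k$ this causes no real obstacle.
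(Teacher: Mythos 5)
Your proof is correct, but it enters the argument through a different door than the paper does. The paper's own proof is essentially a one-line citation of Corollary~\ref{infd} (applied to the inclusion $G(\F)\subset{\rm GL}(d,\F)$), which already packages the two steps you carry out by hand: obtaining a normal split unipotent subgroup of finite index, and then killing the finite quotient by taking $k$ equal to its order. You instead invoke the second paragraph of Theorem~\ref{T2} with $k=p$ (the residual characteristic) to get the finite-extension structure, and then run the same ``choose $k$ a multiple of the exponent of the finite quotient'' argument — which is, almost verbatim, the argument inside the paper's proof of Corollary~\ref{infd}. Both routes rest on the same underlying machinery (Proposition~\ref{nst}), so nothing is gained or lost in generality; your version is marginally more self-contained at the level of this corollary. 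Two small points of bookkeeping: (i) Theorem~\ref{T2} only asserts that $G(\F)$ is a finite extension of \emph{some} split unipotent normal subgroup, not that this subgroup is exactly $R_{us,\F}(G)(\F)$; your argument does not actually need that identification, since any normal split unipotent subgroup of finite index suffices for the descent of $P_k$ to the finite quotient. (ii) For the positive-characteristic clause the paper argues differently — it uses that $P_p$ annihilates a split unipotent group in characteristic $p$, so surjectivity of $P_p$ forces $R_{us,\F}(G)(\F)$ itself to be trivial — whereas you use the finiteness clause of Theorem~\ref{T2} plus the finite-group argument; both are valid.
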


The above results are proved using canonical realization of
algebraic groups as subgroups of matrix groups.  We note that apart
from algebraic groups, there are some other groups that have
interesting and enough linear representations.  Therefore, we
consider linear representation of groups and prove the results for
general groups in terms of their linear representations.

Recall that a linear representation of a group $G$ over a local
field $\F$ is a continuous homomorphism $\rho \colon G \to {\rm GL}(n ,
\F)$.

Apart from algebraic groups, Lie groups over local fields is an
interesting class that admit a linear (not necessarily injective)
representation, namely the adjoint representation: recall that ${\rm
Ad}$ is the adjoint representation of $G$ defined on the Lie algebra
of $G$ (see \cite{Bo} and \cite{Se} for more information on Lie
groups).

Various classes of $p$-adic Lie groups were introduced using the
adjoint representation and interesting results were obtained (ref.
\cite{Rr} and \cite{Ra}).  Motivated by these studies we now
introduce the following:  let $\rho \colon G \to {\rm GL}(n, \F)$ be a
linear representation of $G$.

We say that $G$ is called $\rho$-type $R$ if eigenvalues of
$\rho(g)$ are of absolute value $1$ for all $g\in G$ and $G$ is
called $\rho$-unipotent (resp., $\rho $-compact) if ${\rm \rho}(G)$
is contained in an unipotent (resp., compact) subgroup of ${\rm GL}(n , \F
)$.

We now give the results for Lie groups.

\begin{theorem}\label{lie} Let $G$ be a Lie group over a non-Archimedean
local field $\F$ and $P_k$ is dense in $G$ for $k>1$. Then we have
the following:

\begin{enumerate}

\item[(1)] $G$ is type $R$.

\item[(2)] ${\rm Ad}(G)$ is contained in a compact extension of an
unipotent normal subgroup.

\item[(3)] If $G$ is compactly generated, then $G$ is Ad-compact.

\item[(4)] If the residue characteristic divides $k$ and the characteristic of
of $\F$  is zero, then $\overline {{\rm Ad }(G)}$ is a finite
extension of an unipotent group.

\item[(5)] If the characteristic $p>0$ divides $k$, ${\rm Ad }(G)$ is finite.

\item[(6)] If $P_k$ is dense in $G$ for all $k\geq 1$, then ${\rm Ad}~(G)$ is
a $\F$-split unipotent group, in particular, $G$ is ${\rm
Ad}$-unipotent.  In addition if the characteristic of $\F$ is
positive, then {\rm Ad} is trivial.

\end{enumerate}
\end{theorem}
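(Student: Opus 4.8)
The plan is to push everything through the adjoint representation to the closed subgroup $M:=\overline{{\rm Ad}(G)}$ of ${\rm GL}(n,\F)$, $n=\dim\g$, and to study $M$ by linear dynamics together with Theorem~\ref{T2}. First I would record that $P_k$ is dense in $M$: since ${\rm Ad}$ is a continuous homomorphism and ${\rm Ad}(x^k)={\rm Ad}(x)^k$, the set $P_k(M)$ contains ${\rm Ad}(P_k(G))$, which is dense in ${\rm Ad}(G)$, hence in $M$. As ${\rm Ad}(G)\subseteq M$, every conclusion $(1)$--$(6)$ will follow from a structural statement about the \emph{closed} subgroup $M$. The basic rigidity input is the following: any continuous homomorphism $\phi\colon M\to\Z$ is trivial, because $\phi(P_k(M))=k\,\phi(M)$ would then be dense in the discrete group $\phi(M)$, forcing $k\,\phi(M)=\phi(M)$ and hence $\phi(M)=0$. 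Applying this to $|\det(\cdot)|$ on each factor of an $M$-composition series of $\F^n$ shows that on every composition factor the product of the absolute values of the eigenvalues of each $m\in M$ equals $1$.

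The technical heart, and the main obstacle, is the \emph{crux proposition}: the image of $M$ in the quasi-reductive quotient $Q(\F)=(\mathbf M/R_{us,\F}(\mathbf M^{\circ}))(\F)$ of its Zariski closure $\mathbf M$ has relatively compact closure. On the central split torus directions of $Q$ this is immediate, since the associated $\F$-rational characters give continuous homomorphisms to $\Z$, which are trivial by the rigidity input above. The remaining, genuinely hard, point is that the semisimple part of $Q$ must be $\F$-anisotropic: if it were $\F$-isotropic, a Zariski-dense subgroup of it would contain an $\F$-hyperbolic element, i.e.\ one with an eigenvalue of absolute value $\neq 1$, whose contraction group in $M$ is nontrivial and whose Willis scale exceeds $1$; I would show this is incompatible with $P_k$ having dense image, exactly as the compactness criterion of Theorem~\ref{T2} (where density of $P_k$ on a quasi-reductive group forces compactness) is obtained. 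This is where the tidy-subgroup and linear-dynamics machinery is essential, because density of $P_k$ on the closed subgroup $M$ does \emph{not} transfer to density on all of $Q(\F)$ (witness $\mathcal O^{\times}\subset\Q_p^{\times}$).

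Granting the crux proposition, the rest is assembly. Setting $N:=M\cap R_{us,\F}(\mathbf M^{\circ})(\F)$, a closed normal split-unipotent subgroup of $M$, the crux proposition says $M/N$ is compact; since ${\rm Ad}(G)\subseteq M$, this gives $(2)$, and $(1)$ follows because in a compact-by-(split unipotent) linear group every element has all eigenvalues of absolute value $1$ (the unipotent normal part contributes eigenvalue $1$, and a compact linear group has eigenvalues of absolute value $1$). For $(3)$, if $G$ is compactly generated then so is $M$; as $M/N$ is compact, the cocompact subgroup $N$ is compactly generated, but a nontrivial split unipotent group over a non-Archimedean local field is never compactly generated, so $N=\{e\}$ and $M$ is compact, i.e.\ $G$ is ${\rm Ad}$-compact.

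Finally I would treat the divisibility refinements by transporting the second half of Theorem~\ref{T2} and Corollary~\ref{ida} through ${\rm Ad}$. When the residue characteristic divides $k$ and $\mathrm{char}\,\F=0$, the compact quotient $M/N$ carries a dense $P_k$ and is therefore finite (a nontrivial compact totally disconnected group such as $\Z_p$ is not $P_k$-dense when the residue characteristic divides $k$), giving $(4)$. When $\mathrm{char}\,\F=p$ divides $k$, the same finiteness of the compact part combines with the collapse of power maps on unipotent layers: on a vector group in characteristic $p$ one has $P_p\equiv 0$, and writing $k=p^a m'$ with $p\nmid m'$ the iterated map $P_{p^a}$ drives $M$ into a subgroup of bounded unipotent degree, so density forces $N$ finite and hence $M$, and with it ${\rm Ad}(G)$, finite --- this is $(5)$. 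For $(6)$, if $P_k$ is dense for all $k$ then the compact part is infinitely divisible, hence trivial, so $M$ is $\F$-split unipotent and $G$ is ${\rm Ad}$-unipotent; in characteristic $p$, taking $k=p$ makes $P_p\equiv 0$ on the associated vector group, whence $M=\{e\}$ and ${\rm Ad}$ is trivial, in agreement with Corollary~\ref{ida}. The only step requiring real work is the crux proposition of the second paragraph; everything else is bookkeeping on top of Theorem~\ref{T2}.
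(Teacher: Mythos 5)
Your overall strategy---push everything through ${\rm Ad}$ to the closed linear group $M=\overline{{\rm Ad}(G)}$ and establish a compact-by-(split unipotent) structure theorem for $M$---is exactly the paper's: its proof of this theorem is literally an application of Proposition \ref{nst} and Corollaries \ref{cg}, \ref{infd} to $\rho={\rm Ad}$. The problem is that the step you yourself flag as ``the only step requiring real work'' is not proved, and the sketch you offer for it contains false claims. You propose to get relative compactness of the image of $M$ in the quasi-reductive quotient $Q(\F)$ by showing the semisimple part of $Q$ is $\F$-anisotropic, on the grounds that a Zariski-dense subgroup of an $\F$-isotropic group contains an $\F$-hyperbolic element. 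Both the intermediate claim and its justification fail: ${\rm SL}_2(\Z_p)$ is Zariski-dense in the isotropic group ${\rm SL}_2$ over $\Q_p$ yet, being compact, contains no element with an eigenvalue of absolute value $\neq 1$; and for $p=5$, $k=7$ (coprime to $5^{\infty}\cdot 24={\rm Ord}({\rm SL}_2(\Z_5))$, so $P_7$ is surjective by Proposition \ref{F1}) the $5$-adic Lie group $G={\rm SL}_2(\Z_5)$ satisfies the hypotheses while the semisimple part of the relevant $Q$ is isotropic. Only the relative compactness of the image is true, and the paper reaches it by a different mechanism: Lemma \ref{ga} uses the scale function on the semidirect product $G\ltimes \F^d$ and the identity $s(x^k)=s(x)^k$ to show that density of $P_k$ forces $s\equiv 1$, and Lemma \ref{lus} then invokes the Conze--Guivarc'h distality theorem to produce a flag on whose successive quotients the image is bounded, yielding $K$ and $U$ directly inside ${\rm GL}(d,\F)$ with no recourse to the Zariski closure. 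Your deferral to ``the tidy-subgroup and linear-dynamics machinery'' points at this argument but does not supply it.

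The bookkeeping for (4)--(6) also has gaps. Finiteness of $M/(M\cap U)$ only says that $M$ is a finite extension of a \emph{closed subgroup} of a unipotent group; the content of Proposition \ref{nst}(5) and Corollary \ref{infd} is that in characteristic zero this subgroup is itself a (split) unipotent algebraic group, which the paper proves by a nontrivial induction on the central series of $U$ using the maximal vector subspace of $L\cap U$---you skip this entirely. For (5) and (6) you also need $P_k$ to be dense on $M\cap U$; this does not follow formally from density on $M$ (density of $P_k$ does not pass to closed normal subgroups), and the paper derives it in the proof of Proposition \ref{nst}(4) via the computation $x=av$, $x^k=a^kv_k$, $a^k\in K\cap U=\{e\}$. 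Finally, in (3) your conclusion $N=\{e\}$ is unwarranted: $N$ is merely a closed subgroup of a split unipotent group and could be a nontrivial compact group such as $\Z_p$; the compactness of $M$ that you actually need still follows, since compactly generated closed subgroups of split unipotent groups over non-Archimedean local fields are compact, which is essentially how Corollary \ref{cg} argues.
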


\section{Preliminaries}

\subsection{Semi-direct product}
Let $H$ and $N$ be locally compact groups.  We say that $H$ acts on
$N$ by automorphisms if there is a homomorphism $\phi \colon H \to
{\rm Aut}~(N)$ such that the map $(g, x)\mapsto \phi (g)(x)$ is
continuous.  In this case we define the semi-direct product
$H\ltimes N$ of $H$ and $N$ as the product space $H\times N$ with
binary operation:

$$(g,x) (h,y) = (gh, xg(y))$$ for all $g, h \in G$ and $x, y \in X$.

Then $H\ltimes N$ is a locally compact group.  Identifying $g\in H$
with $(g,e)\in H\ltimes N$ and $x\in N$ with $(e,x) \in H\ltimes N$,
we may view $H$ and $N$ as closed subgroups of $H\ltimes N$.  This
in particular implies that $N$ is a normal subgroup of $H\ltimes N$.
Semi-direct product is a useful technique, particularly helps us to
prove Lemma \ref{ga}.

\begin{example}
\begin{enumerate}

\item Any closed subgroup of ${\rm GL}(n , \F )$ acts on $\F ^n$ by linear
transformations for a local field $\F$.

\item Generally, any closed subgroup of ${\rm Aut}~(G)$ acts canonically on $G$
by automorphisms.

\item Given two closed subgroups $H$ and $N$ of a locally compact group $G$
such that $N$ is normalized by $H$.  Then taking $\phi (g)$ to be
inner automorphism restricted to $N$, defines an action of $H$ on
$N$ by automorphisms.  In this case, if $HN=G$, then $G$ is called
semidirect product of $H$ and $N$.

\end{enumerate}
\end{example}

\subsection{Profinite groups}

A topological group $G$ is said to be profinite if $G$ is a inverse
limit of finite groups.  It is easy to see that any profinite group
is a totally disconnected compact group and the converse is also
true (see Corollary 1.2.4 of \cite{Ws}).

A Steinitz number or supernatural number is a formal infinite
product $\Pi_pp^{n(p)}$, over all primes $p$, where $n(p)$ is a
non-negative integer or infinity.  One may define the product and
l.c.m of super-natural numbers in the natural way: see 2.1 of
\cite{Ws}.

For a finite set $X$, ${\rm Ord}(X)$ denotes the order of $X$ which
is the number of elements in $X$.

For a pro-finite group $G$, the order of $G$ (possibly a
supernatural number) denoted by ${\rm Ord}(G)$ is defined by
$${\rm Ord}(G):= {\rm l.c.m}\{{\rm Ord}(G/U) :{\rm for\; any\; open\; subgroup\;} U\subset G \}.$$
Since $G$ has arbitrarily small compact open normal subgroups, we
may replace open subgroups in the above definition of order, by open
normal subgroups.


We recall the following result from \cite{Se2} and include a simple
proof.

\begin{proposition}\label{F1} Let $G$ be a profinite group.
Then $P_k : G\to G$ is surjective if and only if $k$ is coprime to
${\rm Ord}(G)$, that is for any compact open normal subgroup $U$ of
$G$, $k$ is coprime to the order of the finite group $G/U$.
\end{proposition}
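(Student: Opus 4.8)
The plan is to reduce the statement to the corresponding fact for finite groups and then pass to the inverse limit. First I would record the elementary observation that for a finite group $A$ the power map $P_k$ is surjective if and only if $\gcd(k,{\rm Ord}(A))=1$. The ``if'' direction follows from B\'ezout: choosing integers $a,b$ with $ak+b\,{\rm Ord}(A)=1$ gives $g=g^{ak}=(g^a)^k$ for every $g\in A$, since $g^{{\rm Ord}(A)}=e$ and all powers of the single element $g$ commute. For the ``only if'' direction, if some prime $p$ divides both $k$ and ${\rm Ord}(A)$, then Cauchy's theorem produces an element $x$ of order $p$, whence $x^k=e=e^k$; thus $P_k$ fails to be injective, and since $A$ is finite it therefore cannot be surjective.

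For the forward implication in the profinite setting, suppose $P_k$ is surjective on $G$. For every open normal subgroup $U$ the quotient map $G\to G/U$ is a surjective homomorphism satisfying $\pi_U(P_k(x))=\pi_U(x)^k$, so $P_k$ is surjective on the finite group $G/U$; the finite case then gives $\gcd(k,{\rm Ord}(G/U))=1$. Since a natural number $k$ is coprime to the supernatural number ${\rm Ord}(G)={\rm l.c.m}\{{\rm Ord}(G/U)\}$ precisely when it is coprime to each ${\rm Ord}(G/U)$ (equivalently, no prime dividing $k$ appears in any finite quotient), this yields that $k$ is coprime to ${\rm Ord}(G)$.

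For the converse, assume $k$ is coprime to ${\rm Ord}(G)$, so that $\gcd(k,{\rm Ord}(G/U))=1$, and hence $P_k$ is surjective on $G/U$, for every open normal subgroup $U$. I would then show the image $P_k(G)$ is both closed and dense in $G$. It is closed because $G$ is compact and $P_k$ is continuous, so $P_k(G)$ is compact and hence closed. It is dense because its projection to each finite quotient equals $P_k(G/U)=G/U$; indeed, the closure of any subset $S$ of a profinite group is $\bigcap_U \pi_U^{-1}(\pi_U(S))$, and if $\pi_U(S)=G/U$ for all open normal $U$ then this intersection is all of $G$. A closed dense subset of $G$ is all of $G$, so $P_k$ is surjective.

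The argument is essentially routine; the only point requiring care is the finite-group characterization, where one must handle possibly non-abelian $A$, but the B\'ezout and Cauchy arguments above use no commutativity. One could alternatively bypass the density argument and lift directly: since each $P_k$ is a \emph{bijection} on $G/U$, the unique $k$-th root of $\pi_U(g)$ in $G/U$ is automatically compatible under the projections $G/V\to G/U$ for $V\subset U$ (by uniqueness applied downstairs), and these compatible roots assemble to a $k$-th root of $g$ in $G=\varprojlim G/U$.
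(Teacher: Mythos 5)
Your proof is correct and its core argument for the main (``if'') direction is essentially the one in the paper: surjectivity on every finite quotient $G/U$ gives density of $P_k(G)$, and compactness of $G$ makes the image closed, hence all of $G$. You are in fact somewhat more complete than the paper, which declares the ``only if'' direction and the finite-group case to be sufficient/known, whereas you supply both explicitly (B\'ezout for coprimality implying surjectivity on a finite group, Cauchy for the converse); the inverse-limit alternative you sketch at the end is also valid.
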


\begin{proof}
It is sufficient to prove the if direction.  Since $G$ is a
profinite group, $G$ has a basis $(U_i)$ of compact open normal
subgroups at $e$.  If $k$ is co-prime to the order of $G/U_i$ for
all $i$, then $P_k$ is surjective $G/U_i$ for all $i$, hence for
$g\in G$ there exist $x_i\in G$ and $u_i\in U_i$ such that $g =
x_i^k u_i$.  Since $(U_i)$ is a basis at $e$, $(gU_i)$ is a basis at
$g$, hence any neighborhood of $g$ contains $x_i^k$ for some $i$.
Thus, $P_k$ is dense in $G$.  Since $G$ is compact, $P_k$ is
surjective on $G$.
\end{proof}

The following lemma is easy to see and known and will be used often.

\begin{lemma}\label{profinite}
Let $G$ be a profinite group and $H$ be a closed subgroup of $G$. If
$P_k$ is surjective on $G$ then $P_k$ is surjective on $H$.
Conversely, if $H$ is normal in $G$ and $P_k$ is surjective on $H$
as well as on $G/H$, then $P_k$ is surjective on $G$.
\end{lemma}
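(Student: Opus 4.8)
The plan is to prove Lemma \ref{profinite}, which has two directions for a profinite group $G$ with closed subgroup $H$.

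For the forward direction, I would use Proposition \ref{F1} as the main engine. Since $P_k$ is surjective on the profinite group $G$, Proposition \ref{F1} tells me that $k$ is coprime to ${\rm Ord}(G)$. The key observation is that for a closed subgroup $H$ of a profinite group, ${\rm Ord}(H)$ divides ${\rm Ord}(G)$ in the sense of supernatural numbers. This is a standard fact about profinite groups: every prime power dividing the order of $H$ (via some finite quotient $H/V$) must divide ${\rm Ord}(G)$, because $H$ inherits its profinite structure from $G$ and its finite quotients are controlled by those of $G$. Once I have that ${\rm Ord}(H)$ divides ${\rm Ord}(G)$ and $k$ is coprime to ${\rm Ord}(G)$, it follows immediately that $k$ is coprime to ${\rm Ord}(H)$, so Proposition \ref{F1} applied to $H$ gives surjectivity of $P_k$ on $H$.

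\begin{proof}
Suppose first that $P_k$ is surjective on $G$. Since $G$ is profinite, Proposition \ref{F1} implies that $k$ is coprime to ${\rm Ord}(G)$. Let $H$ be a closed subgroup of $G$. As $H$ is itself profinite, for any open normal subgroup $V$ of $H$ there is an open normal subgroup $U$ of $G$ with $U\cap H\subset V$, so that $H/(U\cap H)$ surjects onto $H/V$; since $H/(U\cap H)$ embeds in the finite group $G/U$, the order of $H/V$ divides ${\rm Ord}(G)$. Taking l.c.m.\ over all such $V$, we conclude that ${\rm Ord}(H)$ divides ${\rm Ord}(G)$. Hence $k$ is coprime to ${\rm Ord}(H)$, and Proposition \ref{F1} applied to $H$ shows that $P_k$ is surjective on $H$.

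Conversely, suppose $H$ is normal in $G$ and $P_k$ is surjective on both $H$ and $G/H$. Both $H$ and $G/H$ are profinite, so by Proposition \ref{F1}, $k$ is coprime to ${\rm Ord}(H)$ and to ${\rm Ord}(G/H)$. For any open normal subgroup $U$ of $G$, consider the finite group $G/U$ with normal subgroup $HU/U\cong H/(H\cap U)$. The order of $HU/U$ divides ${\rm Ord}(H)$, and the order of the quotient $(G/U)/(HU/U)\cong G/HU$ divides ${\rm Ord}(G/H)$. Thus $|G/U|$ divides ${\rm Ord}(H)\cdot{\rm Ord}(G/H)$, to which $k$ is coprime. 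Therefore $k$ is coprime to $|G/U|$ for every open normal subgroup $U$, whence $k$ is coprime to ${\rm Ord}(G)$, and Proposition \ref{F1} gives surjectivity of $P_k$ on $G$.
\end{proof}

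The main obstacle, though a mild one, is the bookkeeping with supernatural numbers and the divisibility relations among orders of $H$, $G/H$, and finite quotients $G/U$. The crux is the elementary but essential fact that $|G/U|$ divides the product ${\rm Ord}(H)\cdot{\rm Ord}(G/H)$, which follows from the short exact sequence $1\to HU/U\to G/U\to G/HU\to 1$ together with the identifications $HU/U\cong H/(H\cap U)$ and $G/HU\cong (G/H)/(HU/H)$. Once these divisibility statements are in place, both directions reduce cleanly to Proposition \ref{F1}, so the real content is packaging the coprimality condition correctly for profinite orders rather than any analytic or topological difficulty.
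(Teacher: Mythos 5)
Your proof is correct and follows essentially the same route as the paper: both directions reduce to the coprimality criterion of Proposition \ref{F1} together with the Lagrange-type divisibility relations ${\rm Ord}(H)\mid{\rm Ord}(G)$ and ${\rm Ord}(G)={\rm Ord}(H)\cdot{\rm Ord}(G/H)$ for profinite groups. The only difference is that you verify these divisibility facts directly from finite quotients, whereas the paper simply cites Proposition 2.1.2 of Wilson's book.
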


\begin{proof}
As $G$ is profinite, $H$ is also profinite. Note that, Lagrange's
Theorem holds for profinite group (see Proposition 2.1.2 of
\cite{Ws}), so ${\rm Ord}(H)$ divides ${\rm Ord}(G)$. Now, $P_k:G\to
G$ is surjective if and only if $(k, {\rm Ord}(G))=1$. This implies
that $(k,{\rm Ord}(H))=1$, and hence $P_k:H\to H$ is surjective.

Conversely, if $H$ is normal in $G$ and $P_k$ is surjective on both
$H$ and $G/H$, then by Proposition \ref{F1} we get that $k$ is
co-prime to both ${\rm Ord}(H)$ and ${\rm Ord}(G/H)$.  By Lagrange's
Theorem on profinite groups, we get that $k$ is co-prime to ${\rm
Ord}(G)$ (ref. Proposition 2.1.2 of \cite{Ws}).  Another application
of Proposition \ref{F1} proves that $P_k$ is surjective on $G$.
\end{proof}

Next lemma relates order of a profinite group and its open
subgroups.

\begin{lemma}\label{ord}
Let $K$ be a profinite group and $L$ be an open subgroup of $K$.
Then ${\rm Ord}(K) ={\rm l.c.m}\{|K/U|: U \,{\rm is\, open \,
normal\, in\,} L \} =[K:L]{\rm Ord}(L).$
\end{lemma}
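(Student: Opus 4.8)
The plan is to prove the two stated equalities separately, working throughout with the open-normal form of the definition of order, namely $\Ord(K) = {\rm l.c.m}\{[K:V] : V \text{ open normal in } K\}$ and likewise for $L$, and interpreting $|K/U|$ as the (finite) index $[K:U]$ of the open subgroup $U$. The two facts I will lean on repeatedly are the multiplicativity of the index for profinite groups (Lagrange, Proposition 2.1.2 of \cite{Ws}): if $V \subseteq U \subseteq K$ are open subgroups then $[K:V] = [K:U][U:V]$, so that $[K:U] \mid [K:V]$; and the elementary identity for supernatural numbers that for a fixed positive integer $c$ and any family $(a_i)$ one has ${\rm l.c.m}\{c\,a_i\} = c\,{\rm l.c.m}\{a_i\}$, which follows prime by prime from $v_p(c\,a_i) = v_p(c) + v_p(a_i)$ with $v_p(c)$ finite.

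For the first equality I will establish two divisibilities. First, every open normal subgroup $V$ of $K$ that is contained in $L$ is automatically open and normal in $L$, and such $V$ are cofinal among all open normal subgroups of $K$: given any open normal $V' \triangleleft K$, choose (using that the open normal subgroups of $K$ form a neighbourhood basis at $e$ and that $L$ is open) an open normal $W \triangleleft K$ with $W \subseteq L$, and replace $V'$ by $V' \cap W$. Since passing to a smaller open normal subgroup only enlarges the index to a multiple, this cofinality gives $\Ord(K) = {\rm l.c.m}\{[K:V] : V \triangleleft K \text{ open}, V \subseteq L\}$, and as each such $V$ is open normal in $L$, the right-hand side divides ${\rm l.c.m}\{[K:U] : U \text{ open normal in } L\}$. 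Conversely, for any open normal $U$ of $L$, $U$ is open in $K$, so its normal core $V = \bigcap_{g \in K} gUg^{-1}$ is a finite intersection of open subgroups, hence open and normal in $K$ with $V \subseteq U$; then $[K:U] \mid [K:V] \mid \Ord(K)$. Taking the l.c.m. over all such $U$ yields the reverse divisibility, and the first equality follows.

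For the second equality I will use that every $U$ appearing is an open normal subgroup of $L$ with $U \subseteq L \subseteq K$, so by multiplicativity $[K:U] = [K:L][L:U]$. Applying the scalar identity with $c = [K:L]$ gives
\[
{\rm l.c.m}\{[K:U] : U \text{ open normal in } L\} = [K:L]\,{\rm l.c.m}\{[L:U] : U \text{ open normal in } L\} = [K:L]\,\Ord(L),
\]
the last step being the open-normal form of the definition of $\Ord(L)$.

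The routine points (openness of the normal core, multiplicativity of the index) are standard; the only step requiring genuine care is the cofinality argument in the first equality, where one must check both that open normal subgroups of $K$ lying inside $L$ already exhaust the order of $K$ and that they remain normal in the smaller group $L$. The possible failure of an open normal subgroup of $L$ to be normal in $K$ is exactly what the normal-core construction repairs in the reverse divisibility.
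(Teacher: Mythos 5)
Your proof is correct and follows essentially the same route as the paper's: the second equality via multiplicativity of the index ($[K:U]=[K:L][L:U]$, Lagrange for profinite groups) together with the scalar l.c.m.\ identity, and the first equality by intersecting open normal subgroups of $K$ with $L$. The only difference is bookkeeping: the paper defines $\Ord(K)$ as an l.c.m.\ over \emph{all} open subgroups, so the reverse divisibility in the first equality is automatic there, whereas your choice of the open-normal form of the definition makes the normal-core argument necessary---a detail you supply correctly.
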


\begin{proof}
Note that $L$ is a finite index subgroup in $K$.  For any open
normal subgroup $U$ of $L$, $|K/U| = |L/U|[K:L]$ (see Proposition
2.1.2 of \cite{Ws}).  This proves the second equality.

Let $V$ be any open normal subgroup of $K$.  Then $U= V\cap L$ is an
open normal subgroup of $L$.  Also, $K/V\simeq (K/U) / (V/U)$ and
hence $|K/V| |V/U|= |K/U|$, that is $K/V$ divides $K/U$.  This
proves the first equality.
\end{proof}

\subsection{Algebraic groups}

Let $\F$ be a local field and $G$ be defined over $\F$.  The
$\F$-unipotent (resp., $\F$-split unipotent) radical of $G$ denoted
by $R_{u,\F} (G)$ (resp., $R_{us, \F}(G)$) is defined to be the
maximal connected unipotent normal subgroup of $G$ that is defined
over $\F$ (resp., $\F$-split). These subgroups are contained in the
usual unipotent radical of $G$.  Every connected linear algebraic
$\F$-group $G$ has $R_{us,\F}(G)$. We say that $G$ is {\it
pseudo-reductive} over $\F$ if $R_{u,\F }(G) = 1$ and $G$ is {\it
quasi-reductive} over $\F$ if $R_{us,\F}(G) = 1$.  Since
$R_{us,\F}(G)\subset R_{u,\F}(G)$, every pseudo-reductive $\F$-group
is also quasi-reductive.  If $\F$ is perfect, then any connected
unipotent group defined over $\F$ and is $\F$-split, hence
quasi-reductive $\F$-group and pseudo-reductive are equivalent and
in fact reductive, that is unipotent radical is trivial.  When $\F$
is not perfect, there do exist quasi-reductive $\F$-groups that are
not pseudo-reductive (ref. \cite{So}).

A connected linear algebraic group G over a field of characteristic
$0$ admits a Levi decomposition, that is, it can be written as the
semidirect product of its unipotent radical and a reductive subgroup
known as Levi factor.  In the case the field $\F$ is of positive
characteristic, Levi factors need not exist, even if $\F$ is
algebraically closed (cf. \cite[A.6]{CGP}) the unipotent radical need not
be defined over $\F$.

However, we can get following two short exact sequences.

$$1\to R_{u,\F}(G)\to G\to G/R_{u,\F}(G)\to 1.$$
and  $$1\to R_{us,\F}(G)\to G\to G/R_{us,\F}(G)\to 1.$$

Let $N=R_{us,\F}(G)$ and $M=R_{u,\F}(G)$. Let $P=G/R_{u,\F}(G)$ and
$Q=G/R_{us,\F}(G)$.

The map $G(\F)\to(G/N)(\F)$ is a surjective submersion of $\F$-analytic
manifolds. In particular, this map induces an isomorphism between 
$G(\F)/  R_{us,\F}(G)(\F)$ and $Q(\F)$ as $H^1(\F, R_{us,\F}(G))=0$.

\subsection{$\F$-nilpotent groups}

By an $\F$-nilpotent group, we mean a nilpotent group $N$ such that
if $N=N_0\supset N_1\supset\cdots\supset N_m=\{e\}$ is a central
series of $N$, then each $N_j/N_{j+1}$ is a finite-dimensional
$\F$-vector space. Let $N$ be an $\F$ -nilpotent group and $N_j$ be
a central series. Let $G$ be a group acting on $N$ as a group of
automorphisms of $N$ . The $G$-action on $N$ is said to be $\F$
-linear if the induced action of $G$ on $N_j /N_{j+1}$ is $\F$
-linear for all $j$ .

{\bf Fact 1 (see \cite{DaM}):} Let $G$ be a group and $N$ a normal
subgroup of $G$. Suppose that $N$ is $\F$ -nilpotent with respect to
a field $\F$, and that the conjugation action of $G$ on $N$ is
$\F$-linear. Let $N=N_0\supset N_1\supset\cdots\supset N_r=\{e\}$ be
the central series of $N$. Let $A=G/N$, $x\in G$ and $a=xN\in A$.
Let $k\in\N$ be co-prime to the characteristic of $\F$. Let
$B=\{b\in A|b^k=a\}$ and let $B^*$ be the subset consisting of all
$b$ in $B$ such that for any $j$,  any element of $N_j/N_{j+1}$
which is fixed under the action of $a$ is also fixed under the
action of $b$. Then for any $b\in B^*$, $u\in N$, there exists $y\in
G$ such that $yN=b$ and $y^k=xu$.  But when $G/N$ is a profinite
group, we have the following:

\begin{lemma}\label{cn}
Let $\F$ be a non-archimedean local field and $G$ be a locally
compact group containing a closed normal subgroup $N$ such that $N$
is $\F$-nilpotent and $G/N$ is a profinite group.  Assume that the
conjugation action of $G$ on $N$ is $\F$-linear.  Suppose $k$ is
co-prime to the characteristic of $\F$ and $P_k$ is surjective on
$G/N$.  Then $P_k$ is surjective on $G$.
\end{lemma}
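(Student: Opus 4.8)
The plan is to reduce the surjectivity of $P_k$ on $G$ to a lifting argument that combines Fact~1 with the profinite structure of $G/N$. Fix $g \in G$ and write $a = gN \in A := G/N$. Since $P_k$ is surjective on the profinite group $A$, there exists $b \in A$ with $b^k = a$; moreover, by Proposition~\ref{F1}, surjectivity of $P_k$ on $A$ is equivalent to $k$ being coprime to $\mathrm{Ord}(A)$, so $P_k$ is in fact a bijection on $A$. The first step is to exploit this bijectivity to arrange the hypothesis ``$b \in B^*$'' needed to invoke Fact~1: the set $B = \{b \in A : b^k = a\}$ is a singleton, so I must verify that its unique element $b$ automatically lies in $B^*$, i.e. that every element of $N_j/N_{j+1}$ fixed by $a$ is also fixed by $b$.

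The key observation for the second step is that on each quotient $V_j := N_j/N_{j+1}$, which is a finite-dimensional $\F$-vector space, $A$ acts $\F$-linearly (this is exactly the $\F$-linearity hypothesis, passing through $G \to A$ since $N$ acts trivially on its central subquotients). The cyclic group generated by $b$ acts on $V_j$, and $a = b^k$ acts as the $k$-th power of this linear operator. Because $k$ is coprime to $\mathrm{Ord}(A)$, the map $c \mapsto c^k$ is a bijection of the finite cyclic group $\langle b \rangle$; hence $\langle a \rangle = \langle b^k \rangle = \langle b \rangle$ as subgroups of the linear action on $V_j$. Therefore $a$ and $b$ generate the same group of operators on each $V_j$, and in particular the fixed-point sets coincide: $V_j^a = V_j^b$. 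This gives $b \in B^*$, clearing the obstacle.

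With $b \in B^*$ established, the third step is a direct application of Fact~1. Given $g \in G$, write $g = xu$ with $x$ any fixed lift and $u \in N$ (trivially, take $x = g$ and $u = e$, or more usefully apply Fact~1 in the form it is stated). Fact~1 then produces $y \in G$ with $yN = b$ and $y^k = g$, which is exactly the desired preimage. Since $g$ was arbitrary, $P_k$ is surjective on $G$.

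The main obstacle I anticipate is the verification that the unique $k$-th root $b$ lies in $B^*$; this is where the coprimality of $k$ with $\mathrm{Ord}(A)$ does the essential work, converting ``$a = b^k$'' into ``$a$ and $b$ generate the same cyclic group of linear operators'' and hence forcing equality of fixed spaces. The remaining points — continuity and closedness ensuring $G$ is genuinely an extension to which Fact~1 applies, and the reduction to a single $g$ — are routine once this linear-algebraic identity is in hand.
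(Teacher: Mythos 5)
Your overall strategy is the same as the paper's: reduce the statement to Fact~1 and verify that a $k$-th root $b$ of $a=gN$ lies in $B^*$. The one step that does not survive scrutiny as written is the claim that $\langle b\rangle$ is a \emph{finite} cyclic group on which $c\mapsto c^k$ is a bijection, whence $\langle a\rangle=\langle b^k\rangle=\langle b\rangle$. In a profinite group $A$ an element may generate an infinite subgroup: for $A=\Z_p$ (written additively) with $p\nmid k$ and $b$ a topological generator, $\langle b^k\rangle$ is a proper subgroup of $\langle b\rangle\cong\Z$, so the asserted equality of abstract cyclic groups fails. What is true, and what you actually need, is the equality of \emph{closures} $\overline{\langle a\rangle}=\overline{\langle b\rangle}$ (because $P_k$ is a continuous bijection, hence a homeomorphism, of the compact procyclic group $\overline{\langle b\rangle}$ when $k$ is coprime to its order); combined with continuity of the $G$-action on each $N_j/N_{j+1}$ this gives that any vector fixed by $a$ is fixed by every element of $\overline{\langle a\rangle}$, in particular by $b$, so $b\in B^*$. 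The paper sidesteps the issue by producing $b$ inside $\overline{\langle a\rangle}$ from the outset: it applies Lemma~\ref{profinite} to the closed subgroup $\overline{\langle a\rangle}$ of the profinite group $G/N$ to get surjectivity of $P_k$ there, after which the $B^*$ condition is immediate. Your uniqueness observation ($B$ a singleton, via Proposition~\ref{F1} together with injectivity of $P_k$ detected on finite quotients) is correct but not needed. With the finiteness claim replaced by the closure argument, your proof is correct and essentially coincides with the paper's.
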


\begin{proof}
Let $x\in G$ and $a=xN$.  Since $P_k$ is surjective on the compact
group $G/N$, Lemma \ref{profinite} implies that $P_k$ is surjective
on $\overline {\left\langle a\right\rangle}$.   Let $N=N_0\supset
N_1\supset\cdots\supset N_r={e}$ be the central series of $N$.  We
observe that, for each $j$, any element of $N_j/N_{j+1}$ fixed by
$a$ is also fixed by the group $\overline {\left\langle
a\right\rangle}$.  In view of Fact 1, we conclude that $x\in
P_k(G)$, that is $P_k$ is surjective on $G$.
\end{proof}

\subsection{Scale function}

Let $G$ be a locally compact totally disconnected group. Then $G$
has arbitrarily small compact open subgroups $U$ of $G$. Let ${\rm
Cos}(G)$ be the set of all compact open subgroup of $G$.

Let ${\rm Aut}(G)$ be the collection of all (continuous)
automorphisms of $G$.  Then the {\bf scale function} $s:{\rm
Aut}(G)\to\N$ is defined as follows: $$s(\alpha):={\rm
 min}\{[\alpha(U):U\cap\alpha(U)]|U\in {\rm Cos}(G)\}$$ for any
$\alpha \in {\rm Aut}(G)$ and the compact open subgroup for which
the minimum is attained is called tidy subgroup of $\alpha$ (see
\cite{Wi1}). The scale function was introduced by G. Willis
\cite{Wi0} and it has proved to be useful.  A property of scale
function that we often uses is the following: $s(\alpha )=1 =
s(\alpha ^{-1})$ if and only if $G$ contains a $\alpha$-invariant
compact open subgroup (ref. Proposition 4.3 of \cite{Wi0}).

For each $x\in G$, let $\alpha _x\colon G \to G$ be the
inner-automorphism defined by $x$, that is $\alpha_x (y)=xyx^{-1}$
for all $y \in G$.  Now define $s(x)=s(\alpha _x)$ and the tidy
subgroup of $\alpha _x$ is defined to be the tidy subgroup of $x$.

It is known that $s(\alpha^n)=s(\alpha)^n$: see \cite{Wi1}.

\section{Representations and Lie groups}

We now prove a dynamic consequence of density of the power map using
semidirect product technique and scale function: recall that $P_k(x)
= x^k$ is the $k$-th power map for $k\geq 1$.

\begin{lemma}\label{ga}
Let $G$ be a locally compact totally disconnected group and $K$ be a
compact normal subgroup of $G$.  Suppose $G$ acts on a totally
disconnected locally compact group $X$ by automorphisms.  Then we
have the following:

\begin{enumerate}

\item If $g\in \overline {P_k(G)K}$ for infinitely many $k$,
then the $g$ action on $X$ fixes a compact open subgroup $K_g$ of
$X$.

\item If $P_k$ is dense in $G/K$ for some $k>1$, then the
$g$ action on $X$ fixes a compact open subgroup $K_g$ of $X$.

\end{enumerate}

\end{lemma}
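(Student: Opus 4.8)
The plan is to use the scale function machinery to extract a compact open subgroup of $X$ that is invariant under the action of $g$. Since $G$ acts on $X$ by automorphisms, each $g \in G$ determines an automorphism of $X$, and I can apply the scale function $s$ on $\mathrm{Aut}(X)$ to this automorphism. The key property I would exploit is the one recorded in the preliminaries: an automorphism $\alpha$ of a totally disconnected locally compact group fixes a compact open subgroup if and only if $s(\alpha) = 1 = s(\alpha^{-1})$. So the entire problem reduces to showing that the automorphism of $X$ induced by $g$ has scale $1$ in both directions. The presence of the compact normal subgroup $K$ and the density/reachability hypotheses on $P_k$ are precisely what should force this.

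For part (1), I would argue as follows. Suppose $g \in \overline{P_k(G)K}$ for infinitely many $k$. Writing $\sigma$ for the action homomorphism $G \to \mathrm{Aut}(X)$ and abbreviating $s(g) := s(\sigma(g))$, the multiplicativity $s(\alpha^n) = s(\alpha)^n$ gives $s(P_k(g)) = s(g)^k$. Since $K$ is compact and normal, its image $\sigma(K)$ lies in a compact subgroup of $\mathrm{Aut}(X)$, and conjugation by a compact set cannot change the scale, so elements of $P_k(G)K$ and $P_k(G)$ have comparable scale behavior; more precisely I expect $s$ to be continuous enough (it is locally constant under the relevant topology) that membership in the closure $\overline{P_k(G)K}$ forces $s(g)$ to be a perfect $k$-th power for infinitely many $k$. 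A natural number that is a $k$-th power for infinitely many $k$ must equal $1$. Applying the same reasoning to $g^{-1}$ (noting $g^{-1} \in \overline{P_k(G)K}$ as well, since the hypothesis is symmetric under passing to inverses within the same index set) yields $s(g) = 1 = s(g^{-1})$, whence $g$ fixes a compact open subgroup $K_g$ of $X$.

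For part (2), the point is to reduce it to part (1). If $P_k$ is dense in $G/K$ for a single $k > 1$, then for every $g \in G$ the coset $gK$ lies in $\overline{P_k(G)K/K}$, which pulls back to $g \in \overline{P_k(G)K}$. The density for one value of $k$ then propagates: since $\overline{P_k(G)K}$ is a subgroup-like set under the semidirect-product structure, density for $k$ should give density for all powers $k^m$, so that $g \in \overline{P_{k^m}(G)K}$ for the infinitely many exponents $k, k^2, k^3, \dots$, and part (1) applies directly. I would carry this out by verifying that the preimage of a dense set under the quotient map $G \to G/K$ together with $K$ is again dense, which is where the compactness of $K$ is essential.

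**The hard part will be** the scale-function continuity/closure argument in part (1): making rigorous the claim that belonging to $\overline{P_k(G)K}$ forces $s(g)$ to be a $k$-th power, since the scale function on $\mathrm{Aut}(X)$ need not be continuous in full generality and one must control how the compact factor $K$ interacts with the tidy subgroups. I expect the resolution to use that $s$ is locally constant on the relevant orbit together with the fact that $\sigma(K)$ normalizes a common compact open subgroup of $X$, so that multiplying by $K$ perturbs the tidying data only within a compact set and leaves the scale unchanged.
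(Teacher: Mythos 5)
Your overall strategy (reduce everything to showing the induced automorphism of $X$ has scale $1$ in both directions, then invoke Proposition 4.3 of \cite{Wi0}) is the paper's strategy, and your reduction of part (2) to part (1) via density of $P_{k^m}$ for all $m$ is a legitimate variant of what the paper does (the paper instead iterates directly, producing $x_n$ with $s(x_n)^{k^n}=s(g)$; both work). But there is a genuine gap at exactly the point you flag as ``the hard part,'' and you do not close it. The missing idea is the \emph{semidirect product}: the paper forms $Y=G\ltimes X$, so that the $G$-action on $X$ becomes conjugation inside the single totally disconnected locally compact group $Y$. This is what makes the two delicate steps rigorous. First, the scale function of a t.d.l.c.\ group evaluated on \emph{inner} automorphisms, $x\mapsto s(\alpha_x)$, is continuous on the group (Corollary 4 of \cite{Wi0}); there is no comparable continuity statement for $s$ on $\mathrm{Aut}(X)$ in the generality you would need, so your appeal to $s$ being ``locally constant under the relevant topology'' on $\mathrm{Aut}(X)$ is unsupported. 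Second, your claim that ``conjugation by a compact set cannot change the scale,'' i.e.\ $s(xh)=s(x)$ for $h\in K$, is not automatic: the scale is not submultiplicative in general. The paper obtains it by applying Lemma 4.2 and Corollary 2.7 of \cite{ShW} to the subgroup of $Y$ generated by $x$ and the compact normal subgroup $K$, which admits a common tidy subgroup and hence satisfies $s(ab)\le s(a)s(b)$; combined with $s(h)=1$ for $h\in K$ (which itself uses a compact open subgroup $V\times W$ of $Y$ normalized by $K$) this gives $s(xh)=s(x)$ and makes $V_g=\{x\in G: s(x)=s(g)\}$ an open set with $V_gK=V_g$. Only then does $g\in\overline{P_k(G)K}$ force $s(g)=s(x_k)^k$.

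Your outline of part (1) after these two facts are in place is correct: a positive integer with $k$-th roots in $\N$ for infinitely many $k$ equals $1$, the hypothesis is inversion-symmetric since $K$ is normal and $P_k(G)^{-1}=P_k(G)$, and $s(g)=s(g^{-1})=1$ yields a $g$-invariant compact open subgroup of $Y$ whose intersection with $X$ is the desired $K_g$. So the proposal is salvageable, but as written it asserts rather than proves the two statements on which the whole argument rests, and the construction that proves them (passing to $G\ltimes X$) is absent.
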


\begin{proof}
Let $Y = G\ltimes X$ be the semidirect product of $G$ and $X$ for
the given action of $G$ on $X$.  Then $Y$ is a totally disconnected
locally compact group containing $G$ as a closed subgroup and $X$ as
a closed normal subgroup. Let $s$ be the scale function on $Y$. Then
$s$ is continuous on $Y$ (ref. Corollary 4 of \cite{Wi0}).

Let $V$ be a compact open subgroup of $G$ containing $K$ and $W$ be
a compact open subgroup of $X$ fixed by $V$.  Then $V\times W$ is a
compact open subgroup of $Y$ invariant under conjugation by elements
of $K$.

Let $x\in G$.  Then by Lemma 4.2 of \cite{ShW}, the group generated
by $x$ and $K$ has a common tidy subgroup in $Y$. This implies by
Corollary 2.7 of \cite{ShW} that $s(ab)\leq s(a)s(b)$ for all $a$
and $b$ in the group generated by $x$ and $K$ (see also Proposition
7.2 of \cite{GlW}).  Since $s(h)=1$ for all $h\in K$, we have
$s(xh)\leq s(x)\leq s(xh)s(h^{-1})=s(xh).$ Therefore, $s(xh)=s(x)$
for all $h\in K$.

For $g\in G$, let $V_g = \{ x\in G \mid s(x) = s(g) \}$.  Then since
$s$ is continuous on $Y$, $V_g$ is an open neighborhood of $g$ in
$G$ such that $V_g K =V_g$.

Suppose $g\in  \overline {P_k(G)K}$ for infinitely many $k$. Then
there are infinitely many $k$ such that $x_k^k\in V_g$ for some $x_k
\in G$.  This implies that $s(g) =s(x_k)^k$ for infinitely many $k$.
Thus, $s(g)\in\N$ has infinitely many roots, hence $s(g)=1$.
Similarly $s( g^{-1})=1$. Now the first part follows from
Proposition 4.3 of \cite{Wi0}.

Suppose $P_k$ is dense in $G/K$ for some $k>1$.  Let $g\in G$.  Then
there is a $x_1 \in G$ such that $x_1 ^k\in V_g$, hence
$s(x_1)^k=s(g)$.  Now by considering $V_{x_1}$, there is a $x_2 \in
G$ such that $x_2 ^k \in V_{x_1}$.  This implies that $s(x_2)^{k^2}
= s(x_1)^k = s(g)$. Inductively, we get a sequence $(x_n)$ in $G$
such that $s(x_n)^{k^n} = s(g)$. Thus, $s(g)\in\N$ has infinitely
many roots, hence $s(g)=1$. Similarly $s( g^{-1})=1$. Now the second
part follows from Proposition 4.3 of \cite{Wi0}.
\end{proof}

In case, the dynamics in the above Lemma \ref{ga} is linear, then we
can proceed further.

\begin{lemma}\label{lus}
Let $G$ be a locally compact totally disconnected group and $V$ be a
finite-dimensional vector space over a non-Archimedean field $\F$.
Suppose $\rho \colon G \to {\rm GL}(V)$ is the map defining an action of
$G$ on $V$ and $s(x) =1$ for all $x\in G$ where $s$ is the scale
function on $V$.  Then $G$ is $\rho$-type $R$ and there exists a
compact group $K \subset {\rm GL}(V)$ and a $\F$-split unipotent
algebraic group $U\subset {\rm GL}(V)$ normalized by $K$ such that
$K\cap U$ is trivial, $\rho (G)\subset KU$ and $\rho (G)U$ is dense
in $KU$.
\end{lemma}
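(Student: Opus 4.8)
The plan is to analyze the linear action of $G$ on $V$ under the hypothesis that the scale function is trivial on all of $G$, and extract from this the structural decomposition $\rho(G) \subset KU$. First I would establish the \emph{type $R$} claim: for each $x \in G$, the condition $s(\alpha_x) = 1 = s(\alpha_x^{-1})$ on the semidirect product $G \ltimes V$ means (by Proposition 4.3 of \cite{Wi0}) that $x$ fixes a compact open subgroup $W_x$ of $V$, i.e. $\rho(x)$ preserves a lattice (a compact open $\Z$-submodule, or $\mathcal{O}_\F$-lattice) in $V$. Preserving a lattice forces all eigenvalues of $\rho(x)$ to have absolute value $1$, since an eigenvalue $\lambda$ with $|\lambda| \neq 1$ would push some lattice vector off to infinity or to zero under iteration; this is exactly the assertion that $G$ is $\rho$-type $R$.

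Next I would build the compact group $K$. The key point is that the closure $H := \overline{\rho(G)}$ in ${\rm GL}(V)$ is a group all of whose elements are bounded (type $R$ persists in the closure because the eigenvalue-modulus-one condition and lattice-preservation are closed conditions). A standard fact over non-Archimedean local fields is that a subgroup of ${\rm GL}(V)$ whose elements all preserve some lattice, and more strongly a group that is \emph{bounded}, is contained in a maximal compact subgroup, i.e. the stabilizer of a single lattice. The obstruction is that individual elements each preserve \emph{some} lattice but perhaps not a \emph{common} one — a group can be type $R$ without being bounded. To get around this I would pass to the semisimple/unipotent (multiplicative Jordan) decomposition: over the algebraic closure each $\rho(x)$ decomposes, and the eigenvalues of modulus one generate a compact (bounded) semisimple part while the unipotent part lives in a unipotent algebraic group. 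I would let $U$ be the $\F$-split unipotent algebraic subgroup of ${\rm GL}(V)$ generated by (or naturally attached to) the unipotent parts of the dynamics, and $K$ a compact group accounting for the ``semisimple, modulus-one'' directions, arranged so that $K$ normalizes $U$ and $K \cap U = \{e\}$ (a unipotent group over a field of characteristic zero, or an $\F$-split unipotent group, meets any group of finite-order-or-compact elements trivially).

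The heart of the matter, and the step I expect to be the main obstacle, is producing the decomposition $\rho(G) \subset KU$ with the precise properties ($K$ compact, $U$ an $\F$-split unipotent algebraic group, $K$ normalizing $U$, trivial intersection, and $\rho(G)U$ dense in $KU$) from the purely dynamical input $s \equiv 1$. The natural route is to realize $KU$ as (the $\F$-points of) a suitable algebraic subgroup: the Zariski closure $\overline{\rho(G)}^{\,Z}$ of $\rho(G)$ in ${\rm GL}(V)$ is an algebraic $\F$-group every element of which is type $R$; I would argue its $\F$-split unipotent radical is the desired $U$, and that the quotient acts on a lattice and is therefore compact — providing $K$ as a (topological) complement inside the full closure. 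The density assertion ``$\rho(G)U$ is dense in $KU$'' should then follow by noting that modulo $U$ the image of $\rho(G)$ is dense in the compact quotient $K$ (since $K$ is essentially the closure of the semisimple parts), and then lifting this density back up through the normal subgroup $U$. The technical care lies in (i) ensuring $U$ is genuinely $\F$-split rather than merely unipotent, which is where one uses that we may take the unipotent radical over $\F$ and that in this matrix setting the relevant unipotent group is upper-triangularizable over $\F$, and (ii) arranging $K \cap U = \{e\}$, for which I would invoke that a compact subgroup has elements whose semisimple parts are bounded while unipotent elements (other than identity) are never bounded in the relevant sense, giving the splitting $K \ltimes U$ structure with $\rho(G)$ sitting inside the product and projecting densely onto each factor in the required sense.
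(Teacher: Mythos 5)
Your opening step is fine and matches the paper: $s(x)=1=s(x^{-1})$ yields, via Proposition 4.3 of \cite{Wi0}, an $\rho(x)$-invariant compact open subgroup (lattice) of $V$, whence all eigenvalues have absolute value $1$ and $G$ is $\rho$-type $R$. The gap is in the second half, and it is exactly the step you flag as ``the main obstacle'': you never actually bridge the passage from \emph{each element individually} preserving a lattice to the \emph{group} being bounded modulo a split unipotent normal subgroup. The paper's proof does this by invoking Theorem 1 of \cite{CoG} (Conze--Guivarc'h): a linear group over a non-Archimedean field all of whose elements have eigenvalues of modulus $1$ admits an invariant flag $\{0\}=V_0\subset V_1\subset\cdots\subset V_m=V$ on each of whose quotients $V_i/V_{i-1}$ the whole group has bounded orbits. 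One then takes $U$ to be the (split unipotent, algebraic) group acting trivially on every $V_i/V_{i-1}$ and $K$ the product of the (compact) closures of the images in the $\GL(V_i/V_{i-1})$; everything else (normalization, trivial intersection, density after shrinking $K$) is routine. This flag theorem, or some equivalent, is the indispensable ingredient, and your proposal contains no substitute for it.

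Moreover, the two routes you sketch in its place do not work as stated. The Zariski-closure route fails because the Zariski closure of a type-$R$ group need not be type $R$ on its $\F$-points: a non-torsion unit $a\in\Z_p^*$ generates a type-$R$ subgroup of $\GL_1(\Q_p)$ whose Zariski closure is $\mathbb{G}_m$, with trivial $\F$-split unipotent radical and non-compact, non-type-$R$ group of $\F$-points $\Q_p^*$; so ``$U=$ the split unipotent radical of the Zariski closure and $K=$ the quotient, which preserves a lattice and is compact'' is false in this example (the correct $K$ here is $\Z_p^*$, obtained from the \emph{topological}, not Zariski, closure). The Jordan-decomposition route also fails: the unipotent parts of the elements of $\rho(G)$ do not form a group, the group they generate need not be unipotent nor normalized by anything useful, and the semisimple parts likewise do not assemble into a subgroup, so there is no well-defined ``compact semisimple factor'' without first producing the invariant flag.
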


\begin{proof}
Let $L= \rho (G)$.  Then $s(g) =1$ for all $g\in L$, hence each
$g\in L$ fixes a compact open subgroup of $V$.  Therefore, it
follows that all eigenvalues of $g\in L\subset {\rm GL}(V)$ are
absolute value $1$.  Thus $G$ is $\rho$-type $R$.

Now it follows from Theorem 1 of \cite{CoG} that there exists a flag
$\{ 0 \} =V_0 \subset V_1 \subset \dots \subset V_{m-1} \subset
V_m=V$ such that $L$ on $V_{i}/V_{i-1}$ has only bounded orbits for
any $i\geq 1$. Let $U = \{ \alpha \in {\rm GL}(V) \mid \alpha
(v+V_{i-1}) = v+V_{i-1} ~~{\rm for ~~ all}~~ v\in V_i ~~{\rm and
}~~i\geq 1 \}$ and $K$ be the direct product of closure of the image
of $L$ in ${\rm GL}(V_i/V_{i-1})$. Then $U$ is a split unipotent algebraic
group and $K$ is a compact group that normalizes $U$ such that
$K\cap U$ is trivial and $L\subset KU$ - note that $KU$ is the
semidirect product $K\ltimes U$ of $K$.  This implies that $KU/U
\simeq K$, hence replacing $K\simeq KU/U$ by the closure of
$LU/U\subset KU/U$, we may assume that $LU$ is dense in $KU$.
\end{proof}

We next obtain results in terms of the representation of the groups.

\begin{proposition}\label{nst}
Let $\F$ be a non-Archimedean local field and $H$ be a group with a
linear representation $\rho \colon H\to {\rm GL}(d,\F)$. Suppose
that $P_k$ is dense in $H$ for some $k>1$.  Then we have the
following:

\begin{enumerate}

\item[{\rm (1)}] $H$ is $\rho$-type $R$.

\item[{\rm (2)}] There exists a compact group $K \subset {\rm GL}(d,\F)$ and a
split unipotent algebraic group $U\subset {\rm GL}(d,\F)$ normalized
by $K$ such that $K\cap U$ is trivial, $\rho (H )\subset KU$ and
$\rho (H)U$ is dense in $KU$.  Moreover, $P_k$ is surjective on
$KU/U\simeq K$.

\item[{\rm (3)}] If $k$ is co-prime to the characteristic of $\F$,
then $P_k$ is surjective on $KU$.

\item[{\rm (4)}] If the residual characteristic $p$ of $\F$ divides $k$,
then $K$ is finite, that is $\rho (H)$ is contained in a finite
extension of a split unipotent algebraic group $U$ and $P_k$ is
dense in $\rho (H )\cap U$.

\item[{\rm (5)}] If the residual characteristic $p$ of $\F$ divides $k$ and
the characteristic of $\F$ is zero, then $\overline {\rho (H)}$ is a
finite extension of a split unipotent algebraic group.

\item[{\rm (6)}] If the characteristic $p$ of $\F$ divides $k$, then $\rho (H)$ is
finite.

\item[{\rm (7)}] If $G$ is an $\F$-group and $\rho (H)=G(\F)$, then $G$
has no $\F$-split torus.

\end{enumerate}
\end{proposition}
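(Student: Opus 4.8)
The plan is to reduce everything to the two structural lemmas already proved, by passing to the closed image. Set $L=\overline{\rho(H)}$, a closed subgroup of $\GL(d,\F)$ and hence a locally compact totally disconnected group. Since $\rho$ is a continuous homomorphism and $P_k$ is dense in $H$, the set $P_k(\rho(H))=\rho(P_k(H))$ is dense in $\rho(H)$, so $P_k$ is dense in $L$. Applying Lemma \ref{ga}(2) with $G=L$, trivial compact normal subgroup $\{e\}$, and $X=V=\F^{d}$ acted on through the inclusion $L\subset\GL(V)$, every $g\in L$ fixes a compact open subgroup of $V$; by the characterization of scale one recalled in Section 2.5 this means the scale of $g$ on $V$ satisfies $s(g)=1=s(g^{-1})$. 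Feeding this into Lemma \ref{lus} (with the inclusion $L\hookrightarrow\GL(V)$) yields at once that $L$, and therefore $\rho(H)$, is type $R$, which is (1), together with a compact group $K$ and an $\F$-split unipotent $U$ normalized by $K$ with $K\cap U=\{e\}$, such that $L\subset KU$ and $LU$ is dense in $KU$. As $\rho(H)$ is dense in $L$ we obtain $\rho(H)\subset KU$ with $\rho(H)U$ dense in $KU$. Projecting along $KU\to KU/U\simeq K$, the image of $\rho(H)$ is dense in the compact group $K$, and since $P_k$ is dense in $\rho(H)$ its image stays dense in $K$; $K$ being compact, $P_k$ is surjective on $K$, which finishes (2).

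For (3) I would invoke Lemma \ref{cn} with ambient group $KU$ and normal subgroup $U$. Here $U$ is $\F$-split unipotent, hence $\F$-nilpotent in the sense of Section 2.3 (the flag defining $U$ produces a central series whose successive quotients are finite-dimensional $\F$-vector spaces), the conjugation action of $KU$ on $U$ is $\F$-linear since it is induced by the linear action on those quotients, and $KU/U\simeq K$ is compact and totally disconnected, hence profinite. As $k$ is coprime to the characteristic of $\F$ and $P_k$ is surjective on $K$ by (2), Lemma \ref{cn} gives surjectivity of $P_k$ on $KU$.

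For (4) and (6) the key is to pin down $K$. A compact subgroup of $\GL(d,\F)$ stabilizes an $\mathcal O$-lattice and so is conjugate into $\GL(d,\mathcal O)$, whose principal congruence subgroup is an open pro-$p$ subgroup; thus $K$ is virtually pro-$p$, where $p$ is the residual characteristic. By (2) and Proposition \ref{F1}, $k$ is coprime to $\Ord(K)$, so when $p\mid k$ the pro-$p$ open subgroup of $K$ contributes only powers of $p$ to $\Ord(K)$ and must be trivial by Lemma \ref{ord}; hence $K$ is finite. This gives (4): $\rho(H)\subset KU$ sits inside a finite extension of the split unipotent $U$, and since $|K|$ is coprime to $k$ the $k$-th power map is bijective on $K$, so any $x\in\rho(H)$ with $x^k\in U$ already lies in $U$; as $U$ is open in $KU$, density of $P_k$ in $\rho(H)$ then forces $P_k$ to be dense in $\rho(H)\cap U$. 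For (6), where characteristic and residual characteristic coincide with $p\mid k$, I observe that on $U$ the map $P_k$ sends $x$ to $k\bar x\equiv 0$ in the first (characteristic-$p$ vector-space) quotient of the central series, so $P_k(\rho(H)\cap U)$ lands in a proper closed subgroup unless $\rho(H)\cap U$ already meets the next filtration step; iterating down the central series forces $\rho(H)\cap U=\{e\}$, whence $\rho(H)$ embeds in the finite group $K$ and is finite. Finally (7) is immediate from (1): a nontrivial $\F$-split torus of $G\subset\GL_d$ acts on $\F^d$ with some nonzero weight $\chi$, and choosing $t$ with $|\chi(t)|\neq 1$ produces an element of $G(\F)=\rho(H)$ with an eigenvalue off the unit circle, contradicting type $R$.

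It remains to address (5), which I expect to be the main obstacle. With $p\mid k$ and characteristic of $\F$ equal to zero, part (4) gives $K$ finite and $P_k$ dense in $S:=\overline{\rho(H)\cap U}$, a closed subgroup of the unipotent group $U(\F)$. In characteristic zero $\log$ and $\exp$ are polynomial bijections between $U(\F)$ and its Lie algebra $\mathfrak u(\F)$ intertwining $P_k$ with multiplication by $k$, since $\log(x^k)=k\log x$. The difficulty is that closed subgroups of $U(\F)$ need not be algebraic: the introduction's example $\Z_p\subset\Q_p$ is a closed subgroup on which $P_k$ fails to be dense precisely when $p\mid k$. The plan is to run an induction along the central series, using that on each abelian vector-space quotient multiplication by $k$ has dense image on a closed submodule only when that submodule has no compact direction, i.e. is a genuine subspace; this should eliminate all compact lattice parts of $S$ and identify $S$ with a split unipotent group, so that $L=\overline{\rho(H)}$ is a finite extension of a split unipotent group. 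Handling the interaction of this vector-space rigidity with the non-abelian steps of the central series, and controlling the field of rationality of the resulting subspaces, is where the argument needs the most care.
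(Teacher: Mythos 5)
Parts (1)--(4), (6) and (7) of your proposal are correct and follow essentially the same route as the paper: Lemma \ref{ga} applied to the action on $\F^d$ to kill the scale, Lemma \ref{lus} to produce $K$ and $U$, surjectivity of $P_k$ on the compact quotient $KU/U\simeq K$, Lemma \ref{cn} for (3), the open pro-$p$ subgroup of $K$ for (4), the decomposition $x=av$, $x^k=a^kv_k$ to pull density down into $\rho(H)\cap U$, and projection to the graded pieces of the central series for (6). (Your phrasing of (6) via the top vector-space quotient and descent is in fact a slightly more careful version of the paper's one-line claim that $P_k(U)=\{e\}$.)

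The genuine gap is part (5), which you explicitly leave as a plan. Two things are missing. First, the abelian case must be made precise: for $L$ a closed subgroup of a characteristic-zero non-Archimedean vector space $W$, with $V$ the maximal vector subspace contained in $L$, one needs the structural fact that $L/V$ is compact (a closed subgroup is a closed $\Z_p$-submodule, hence an extension of a compact module by $V$); then $P_k$ dense in $L$ makes $P_k$ surjective on the compact group $L/V$, which is impossible for a nontrivial pro-$p$ group when $p\mid k$, so $L/V$ is finite and torsion-freeness of $W/V$ gives $L=V$. Second, and more importantly, your sketch does not say how to pass from the abelian case to general $U$, and a naive induction along the central series does not close up: density of $P_k$ in the image of $L$ in each graded quotient does not by itself give density of $P_k$ in $L\cap N_j$. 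The paper's resolution is to induct via the center $Z$ of $U$: $P_k$ is dense in $LZ/Z\subset U/Z$, so by induction $\overline{LZ}=U$; since $Z$ is central this forces $[U,U]=[\overline{LZ},\overline{LZ}]\subset\overline{[L,L]}\subset L$, and then the abelian case applied to $L/[U,U]\subset U/[U,U]$ yields $L=U$. This commutator step is exactly the ``interaction with the non-abelian steps'' you flag as needing care, and without it the proof of (5) is incomplete; note also that (5) feeds into Corollary \ref{infd} and Theorem \ref{T2}, so it cannot be deferred.
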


\begin{remark}
The above results (5) and (6) generalize Corollary 1.7 of \cite{Ch3}
to any linear group (not necessarily algebraic) over any
non-Archimedean local field.
\end{remark}

\noindent{\bf Proof of Proposition~\ref{nst}:}
Let $L= \rho (H)$.  Then $P_k$ is dense in $L$.  By Lemma \ref{ga},
each $g\in L$ fixes a compact open subgroup of $\F^n$. Therefore, it
follows that all eigenvalues of $g\in L\subset {\rm GL}(d,\F)$ are
absolute value $1$.  This proves (1).

It follows from Lemma \ref{ga} that the scale function is trivial on
$L$. Therefore by Lemma \ref{lus}, there exists a compact group $K
\subset {\rm GL}(d,\F)$ and a split unipotent algebraic group
$U\subset {\rm GL}(d,\F)$ normalized by $K$ such that $K\cap U$ is
trivial, $\rho (H )\subset KU$ and $\rho (H)U$ is dense in $KU$.
Since $P_k $ is dense in $L$, $P_k$ is dense in $LU/U$. Since $LU$
is dense in $KU$ and $KU/U$ is compact, we get that $P_k$ is
surjective on $KU/U\simeq K$.  This proves (2).

Suppose $k$ is co-prime to the characteristic of $\F$.  Then since
$P_k$ is surjective on the compact group, $KU/U$,  Lemma \ref{cn}
implies that $P_k$ is surjective on $KU$. This proves (3).

Suppose the residual characteristic $p$ of $\F$ divides $k$.  Since
$K$ is a compact linear group over $\F$, we get that $K$ contains an
open subgroup $K_0$ such that $K_0$ is pro-p group. By Lemma
\ref{profinite}, $P_k$ is surjective on $K_0$. Since $p$ divides
$k$, $K_0$ is trivial.  This implies that $K$ is a finite group and
hence $U$ is an open subgroup of $KU$.  Therefore, $L\cap U$ is open
in $L$.  Since $P_k$ is dense in $L$, $P_k(L)\cap U$ is dense in
$L\cap U$.  Let $g \in P_k(L)\cap U$.  Then there exist $x\in L$
such that $x^k=g\in L\cap U$.  Since $L\subset KU$, there exist
$a\in K$ and $v \in U$ such that $x=av$, hence $x^k= (av)^k= a^kv_k$
where $v_k = \prod _{j=1}^{k} a^{-k+j} va^{k-j} \in U$. This implies
that $a^k\in K\cap U$.  Since $K\cap U$ is trivial, $a^k=e$, hence
$k$ divides the order of $K$ or $a=e$. Since $P_k$ is surjective on
$K$, by Proposition \ref{F1}, we get that $a=e$. Thus, $x= v \in
L\cap U$.  Therefore, $g\in P_k(L\cap U)$.  Thus, $P_k$ is dense in
$L\cap U$.  This proves (4).

Suppose the characteristic of $\F$ is zero and the residual
characteristic $p$ of $\F$ divides $k$.  Since $P_k$ is dense in
$L$, $P_k$ is dense in $\overline L$.  Therefore, replacing $L$ by
$\overline L$, we may assume that $L$ is closed.  Since $L/L\cap
U\simeq LU/U$ is finite, it is sufficient to claim that $L\cap U$ is
a unipotent group.  By (4), $P_k$ is dense in $L\cap U$, we may
assume that $L$ is a closed subgroup of $U$ and $U$ is the smallest
unipotent group containing $L$.  If $U$ is abelian, then $U$ is the
vector space spanned by $L$.  Let $V$ the maximal vector space
contained in $L$.  Then $L/ V$ is compact. Since $P_k$ is dense in
$L$, $P_k$ is surjective on $L/V$ but $p$ divides $k$, hence $L/V$
is finite. Since $L/V$ is a subgroup of the unipotent group $U/V$
which has no elements of finite order, we get that $L=V$, hence
$L=U$.  If $U$ is a general unipotent group, let $Z$ be the center
of $U$. Then since $P_k$ is dense in $L$, $P_k$ dense in
$LZ/Z\subset U/Z$, hence by induction $\overline {LZ}=U$. This
proves that $[U, U]\subset  L$. Using the commutative case for
$U/[U,U]$, we may conclude that $L=U$.  This proves (5).

Suppose characteristic of $\F$ divides $k$.  Since $U$ is a split
unipotent group and $p$ divides $k$, we get that $P_k (U)= \{ e \}$.
Since $P_k$ is dense in $L\cap U$, $L\cap U$ is trivial.  Since $L$
is contained in a finite extension of $U$, $L$ is finite.  This
proves (6).

Suppose $H$ is the group of $\F$-points of an algebraic group $G$
defined over $\F$.  Then the set of eigenvalues of elements of any
non-trivial split torus in $H$ is $\F^*$.  Thus, {\it (1)} implies
that $G$ has no $\F$-split tours.
\qed

It is known that $P_k$ is not dense in any finitely generated
infinite abelian groups: any such group is isomorphic to $F\times \Z
^d$ for $d\geq 1$ for some finite group $F$, hence have $\Z$ as a
quotient. We extend this to compactly generated groups and its
linear representations over non-Archimedean local fields.

\begin{corollary}\label{cg}
Let $\F$ be a non-Archimedean local field and $H$ be a group with a
linear representation $\rho \colon H \to {\rm GL}(d,\F)$.  If $P_k$
is dense in $H$ for some $k>1$ and $H$ is compactly generated then
$H$ is $\rho$-compact.
\end{corollary}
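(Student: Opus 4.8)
The plan is to reduce everything to the structure theorem already in hand, Proposition~\ref{nst}(2), and then to use compact generation to pin down the unipotent part. Write $L=\rho(H)$. Since $\rho$ is continuous and $H$ is compactly generated, $L$ is compactly generated; fix a symmetric compact generating set $C$ of $L$. Because $P_k$ is dense in $H$, Proposition~\ref{nst}(2) applies and gives $L\subseteq KU$, where $K\subseteq\GL(d,\F)$ is compact, $U\subseteq\GL(d,\F)$ is a split unipotent group normalised by $K$ with $K\cap U=\{e\}$, and $KU=K\ltimes U$. Recall from Lemma~\ref{lus} that $U$ is the unitriangular group of a flag $\{0\}=V_0\subset V_1\subset\cdots\subset V_m=\F^d$, i.e. $U=\{\alpha\in\GL(d,\F):(\alpha-I)V_i\subseteq V_{i-1}\text{ for all }i\}$. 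Since $K$ is compact, it suffices to show that the $U$-components of the elements of $L$ form a bounded subset of $U$; then $\overline L$ is a closed subgroup sitting inside a compact set, hence a compact group containing $\rho(H)$, which is exactly $\rho$-compactness.

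Each $c\in C$ has a unique decomposition $c=\kappa_c\upsilon_c$ with $\kappa_c\in K$, $\upsilon_c\in U$, and $\upsilon_c=\kappa_c^{-1}c$. As $K$ is compact its elements (and their inverses) have uniformly bounded matrix entries, and $C$ is bounded, so the matrix estimate for products over a non-Archimedean field shows $B_0:=\{\upsilon_c:c\in C\}$ is a bounded subset of $U$. Let $\tilde B=\{k\,w\,k^{-1}:k\in K,\ w\in B_0\}$ be its saturation under $K$-conjugation; as the image of the compact set $K\times\overline{B_0}$ under the continuous conjugation map, $\tilde B$ is again bounded. Now for any $\ell\in L$ write $\ell=c_1\cdots c_n$ with $c_j\in C$, $c_j=\kappa_j\upsilon_j$. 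Using that $K$ normalises $U$, sweep all the $K$-factors to the left: $\ell=(\kappa_1\cdots\kappa_n)\,\tilde w_1\cdots\tilde w_n$, where $\tilde w_j=(\kappa_{j+1}\cdots\kappa_n)^{-1}\upsilon_j(\kappa_{j+1}\cdots\kappa_n)\in\tilde B$. By uniqueness of the $K\ltimes U$ decomposition, the $U$-component of $\ell$ equals $\tilde w_1\cdots\tilde w_n$. Hence every $U$-component of an element of $L$ lies in the subgroup $\langle\tilde B\rangle$ generated by $\tilde B$, and the task reduces to showing $\langle\tilde B\rangle$ is bounded.

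This boundedness is the one step carrying real content, and the point is that the nilpotency of $U$ together with the ultrametric makes the bound independent of word length. Fix a basis adapted to the flag, so each $u\in U$ is unitriangular, $u=I+N(u)$ with $N(u)V_i\subseteq V_{i-1}$; let $C_0\ge 1$ bound all matrix entries of elements of $\tilde B$. Expanding a product $\prod_{s=1}^{n}(I+N(u_s))$ of elements of $\tilde B$ gives a sum over subsets $S\subseteq\{1,\dots,n\}$ of the ordered products $\prod_{s\in S}N(u_s)$. Since each factor drops the flag by one step, any ordered product of $m$ or more of them maps $V_m$ into $V_0=0$ and hence vanishes, so only subsets with $|S|\le m-1\le d-1$ contribute. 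By the ultrametric inequality each matrix entry of a surviving term is bounded by $C_0^{|S|}\le C_0^{d-1}$, a bound independent of $n$. Thus every element of $\langle\tilde B\rangle$ has all entries bounded by $C_0^{d-1}$, so $\langle\tilde B\rangle$ is bounded and $\overline{\langle\tilde B\rangle}$ is compact.

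Combining these, $L\subseteq K\cdot\overline{\langle\tilde B\rangle}$, which is compact as the image of $K\times\overline{\langle\tilde B\rangle}$ under multiplication; therefore $\overline L$ is a compact subgroup of $\GL(d,\F)$ containing $\rho(H)$, i.e. $H$ is $\rho$-compact. I would emphasise that compact generation is used precisely in producing the bounded generating set $B_0$, and that it cannot be dropped: taking $K$ trivial and $U=\Q_p$, the subgroup $\Z[1/p]\subset\Q_p$ satisfies that $P_p$ is surjective yet is unbounded. The only genuine obstacle is the uniform entrywise control of arbitrarily long products in $U$; the relation $N^m=0$ and the ultrametric inequality render this bound independent of the number of factors, while the compactness of $K$ dispatches the conjugation bookkeeping.
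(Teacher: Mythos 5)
Your proof is correct and follows essentially the same route as the paper's: both apply Proposition~\ref{nst}(2) to place $\rho(H)$ inside $K\ltimes U$ and then use a compact generating set to trap $\overline{\rho(H)}$ in a compact subgroup. The only difference is that the paper simply asserts that a compact subset of $U$ is contained in a compact $K$-invariant subgroup, whereas you justify this step explicitly via the nilpotency-plus-ultrametric entry estimate; that is a filling-in of detail rather than a different argument.
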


\begin{proof}
Let $L= \rho (H)$.  Then $P_k$ is dense in $L$.  By (2) of
Proposition \ref{nst}, there is a unipotent group $U$ and a compact
linear group $K$ normalizing $U$ such that $L \subset KU$. Let $C$
be a compact generating subset of $L$.  Then $C\subset KM$ where $M$
is a compact subgroup of $U$. Since $K$ normalizes $U$, any compact
subset of $U$ is a contained in a compact $K$-invariant subset of
$U$.  Thus, we may assume that $M$ is a $K$-invariant compact
subgroup.  This implies that $KM$ is a compact subgroup. Since
$C\subset KM$ and $L$ is generated by $C$, we get that $\overline L$
is compact.
\end{proof}

The next result shows that unipotent groups are the only infinite
divisible linear groups.

\begin{corollary}\label{infd}
Let $\F$ be a non-Archimedean local field and $H$ be a group with a
linear representation $\rho \colon H \to {\rm GL}(d,\F)$. Suppose
$P_k$ is dense in $H$ for all $k$.  Then $\overline {\rho (H)}$ is a
split unipotent algebraic group.  In addition if $\F$ has positive
characteristic, $\rho $ is trivial.
\end{corollary}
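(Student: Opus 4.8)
The plan is to reduce everything to the structural dichotomy already established in Proposition \ref{nst} and then exploit the hypothesis that $P_k$ is dense for \emph{all} $k$, not merely one. First I would set $L = \rho(H)$, so that $P_k$ is dense in $L$ for every $k \geq 1$. By part (2) of Proposition \ref{nst}, for each fixed $k > 1$ there is a compact group $K \subset {\rm GL}(d,\F)$ and a split unipotent algebraic group $U \subset {\rm GL}(d,\F)$ normalized by $K$, with $K \cap U$ trivial, such that $L \subset KU$, the product $LU$ is dense in $KU$, and $P_k$ is surjective on $KU/U \simeq K$. The key observation is that this decomposition $KU$ is intrinsic to the dynamics of $L$ (via the flag constructed in Lemma \ref{lus}) and does not depend on $k$; the scale function being trivial on $L$ is what drives the construction, and that triviality holds as soon as $P_k$ is dense for some $k>1$. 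So I would fix one such decomposition $KU$ once and for all, with $K$ a fixed compact group.

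Next I would show that the compact part $K$ must be trivial. Since $P_k$ is dense in $L$ for \emph{every} $k$, and $LU$ is dense in $KU$, passing to the quotient $KU/U \simeq K$ shows that $P_k$ is surjective on the compact group $K$ for every $k \geq 1$. Now $K$ is a compact linear group over $\F$, hence profinite up to a finite-index open subgroup (being totally disconnected and compact), so by Proposition \ref{F1} surjectivity of $P_k$ for all $k$ forces $k$ to be coprime to ${\rm Ord}(K)$ for every $k$. Taking $k$ to run over all primes dividing ${\rm Ord}(K)$ gives a contradiction unless ${\rm Ord}(K) = 1$, i.e.\ $K$ is trivial. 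I would spell this out using Lemma \ref{ord} and the definition of ${\rm Ord}$ to handle the supernatural-number bookkeeping cleanly. With $K$ trivial we get $L \subset U$ and $L$ dense in $U$, so $\overline{L} = \overline{\rho(H)}$ is a closed subgroup of the split unipotent algebraic group $U$; since $U$ is the minimal such group (it arises as the full flag-stabilizer and $LU$ is dense in $KU = U$), in fact $\overline{\rho(H)} = U$ is itself a split unipotent algebraic group.

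For the positive-characteristic addendum, suppose $\mathrm{char}(\F) = p > 0$. Then in particular $P_p$ is dense in $H$, and $p$ divides $k=p$, so part (6) of Proposition \ref{nst} applies and gives that $\rho(H) = L$ is \emph{finite}. But we have just shown $\overline{L} = U$ is a split unipotent group over a field of characteristic $p$, which is torsion-free as an algebraic group; a finite subgroup of a torsion-free group is trivial, so $L = \{e\}$, i.e.\ $\rho$ is trivial. The main obstacle I anticipate is the bookkeeping in the second paragraph: one must be careful that the single decomposition $KU$ can be used simultaneously for all $k$ (rather than getting a $k$-dependent $K_k$), and that the notion of ${\rm Ord}(K)$ for a compact-but-not-profinite $K$ is handled correctly by passing to an open profinite subgroup via Lemma \ref{ord}. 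Once that is pinned down, the contradiction forcing $K$ trivial is immediate, and the rest is a direct appeal to the already-proven parts of Proposition \ref{nst}.
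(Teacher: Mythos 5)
There is a genuine gap in the main step. After you kill the compact factor $K$ you have only that $L=\rho(H)$ is contained in the split unipotent group $U$; the assertion that therefore $\overline{\rho(H)}=U$ does not follow. The $U$ produced by Lemma \ref{lus} is the \emph{full} stabilizer of the flag acting trivially on the graded quotients, not the minimal unipotent group containing $L$, and once $K$ is trivial the statement ``$LU$ is dense in $KU=U$'' is vacuous (it reads $U$ is dense in $U$). Even if you replace $U$ by the Zariski closure of $L$, the whole content of the corollary is that the \emph{topological} closure $\overline{L}$ fills up the $\F$-points of that algebraic group --- compare $\Z_p\subset\Q_p=\mathbb{G}_a(\Q_p)$, a closed subgroup of a split unipotent group that is not itself (the $\F$-points of) a unipotent algebraic group. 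Ruling this out requires using density of $P_k$ for some $k$ divisible by the \emph{residual} characteristic $p$, which is exactly what part (5) of Proposition \ref{nst} does (via the maximal-vector-subspace argument showing $L/V$ is finite and then trivial, plus the induction on the central series). The paper's proof simply invokes (5) to get that $\overline{\rho(H)}$ is a \emph{finite} extension of a split unipotent $U$, and then kills the finite quotient by taking $k$ equal to its order; your coprimality argument via Proposition \ref{F1} is a perfectly good (indeed slightly more robust) way to kill the compact/finite part, but it cannot substitute for step (5).

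A secondary error: in positive characteristic a split unipotent group is not torsion-free --- every element of $\mathbb{G}_a(\F)$ is $p$-torsion --- so ``a finite subgroup of a torsion-free group is trivial'' is not available. The correct finishing move (and the paper's) is that once $\overline{\rho(H)}$ is known to be a split unipotent \emph{algebraic} group and $\rho(H)$ is finite by part (6), the group must be zero-dimensional, hence trivial. Both repairs go through the same missing ingredient, namely Proposition \ref{nst}(5).
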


\begin{proof}
If the characteristic of $\F$ is zero, then by (5) of Proposition
\ref{nst}, $\overline {\rho (H)}$ is a finite extension of an
unipotent algebraic group $U$.  This implies that $\overline {\rho
(H)}/U = (\rho (H)U)/U$. Let $k= |\overline {\rho (H)}/U|$.  Then
since $P_k$ is dense in $H$, $P_k$ is surjective on $\rho (H)U/U=
\overline {\rho (H)}/U$ which has order $k$, hence $\overline {\rho
(H) }/U$ is trivial. Thus, $\overline {\rho (H)}= U$.

If $\F$ has positive characteristic, then by (6) of Proposition
\ref{nst}, $\rho (H)$ is a finite group, hence $\rho (H ) =
\overline {\rho (H)}$. This implies that $U$ is finite, hence
trivial.
\end{proof}

For subgroups of linear groups over global fields: compare with
Section 6 of \cite{Ch2}.

\begin{corollary}
Let $\E$ be a global field and $H$ be a subgroup of ${\rm GL}(d, \E )$.
Assume that $P_k$ is surjective on $H$ for some $k>1$.

\begin{enumerate}

\item[(1)] If the characteristic of $\E$ is $0$, then $H$ contains an
unipotent normal subgroup of finite index.

\item[(2)] If the characteristic of $\E$ is $p>0$, then $H$ is locally finite,
that is any finitely generated subgroup of $H$ is finite.

\item[(3)] If the characteristic $p$ of $\E$ divides $k$, then
$H$ is finite.

\item[(4)] If $P_k$ is surjective on $H$ for all $k\geq 1$, then either
$H$ is a unipotent group or $H$ is trivial depending on
characteristic of $\E$ is $0$ or positive.

\end{enumerate}
\end{corollary}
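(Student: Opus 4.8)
The plan is to reduce each statement to the representation-level results already established in Proposition~\ref{nst} and Corollary~\ref{infd} by passing from the global field $\E$ to its completions. The key observation is that a global field $\E$ embeds into its completion $\E_v$ at any place $v$, and each such completion (for a non-Archimedean $v$) is a non-Archimedean local field. Thus a subgroup $H \subset \GL(d,\E)$ may be viewed, via the inclusion $\E \hookrightarrow \E_v$, as a subgroup of $\GL(d, \E_v)$, giving a linear representation $\rho_v \colon H \to \GL(d, \E_v)$ to which the local theory applies. Since surjectivity of $P_k$ on $H$ in particular implies density of $P_k$ in $H$, the hypotheses of Proposition~\ref{nst} are met for each $\rho_v$.

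First I would treat part (1): choosing a non-Archimedean place $v$ with residual characteristic $p$ coprime to $k$ (which exists since $k$ has only finitely many prime divisors and $\E$ has infinitely many places), part (5) of Proposition~\ref{nst} gives that $\overline{\rho_v(H)}$ is a finite extension of a split unipotent algebraic group $U_v$. The surjectivity of $P_k$ on $H$ — stronger than density — should then force, as in the proof of Corollary~\ref{infd}, that the finite quotient $\overline{\rho_v(H)}/U_v$ be trivial once one also invokes a place where the quotient can be killed; more carefully, one extracts an \emph{unipotent normal subgroup of finite index} of $H$ by pulling back $U_v \cap \rho_v(H)$ along $\rho_v$ and checking the index is finite. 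For part (2), in characteristic $p>0$ I would apply part (6) of Proposition~\ref{nst} at a place $v$ whose residue field has characteristic dividing $k$ only if needed; the cleaner route is to restrict to a finitely generated subgroup $H_0 \subset H$, note that $H_0$ sits inside $\GL(d, \E_0)$ for a finitely generated subfield $\E_0$, and use that $P_k$ surjective passes to completions to conclude $\rho_v(H_0)$ is finite, whence $H_0$ is finite by injectivity of a suitable $\rho_v$ on the finitely generated torsion-free part. Part (3) is the direct application of part (6) of Proposition~\ref{nst} at a place $v$ over $p$: when $\mathrm{char}(\E) = p \mid k$, the representation $\rho_v(H)$ is finite, and one argues $H$ itself is finite.

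Part (4) combines the above: if $P_k$ is surjective for all $k$, then the finite-index unipotent subgroup from (1) (in characteristic $0$) must equal $H$, since a nontrivial finite quotient would obstruct surjectivity of $P_k$ for $k$ equal to its order — exactly the argument of Corollary~\ref{infd} transported to $\E$; in positive characteristic, local finiteness from (2) together with surjectivity for all $k$ forces every finitely generated subgroup to be trivial, so $H$ is trivial.

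The main obstacle I anticipate is the descent from the local (completion) conclusions back to statements about $H \subset \GL(d,\E)$ itself, rather than about $\rho_v(H)$. The representations $\rho_v$ need not be injective in general, but here they are inclusions $\E \hookrightarrow \E_v$ followed by the identity embedding, so $\rho_v$ \emph{is} injective on $H$; the real subtlety is controlling the unipotent group structure simultaneously — verifying that the preimage $\rho_v^{-1}(U_v)$ is genuinely an abstract unipotent subgroup of $\GL(d,\E)$ and has finite index, and that the finite-index bound does not depend on the choice of place in a way that obstructs the conclusion. Ensuring the finiteness in part (2) is uniform across all finitely generated subgroups (rather than merely place-by-place) is where I expect the technical care to concentrate.
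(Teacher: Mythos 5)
Your overall strategy --- embed $\E$ into a non-Archimedean completion $\E_v$ and apply Proposition \ref{nst} to the resulting (injective) linear representation of $H$ --- is the same as the paper's, but two of your reductions contain genuine errors.

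For (1) you choose a place $v$ whose residual characteristic is \emph{coprime} to $k$ and then invoke part (5) of Proposition \ref{nst}. Part (5) requires the opposite hypothesis: the residual characteristic must \emph{divide} $k$. That divisibility is precisely what forces the compact factor $K$ to be finite (an open pro-$p$ subgroup of $K$ on which $P_k$ is surjective with $p\mid k$ must be trivial). At a place with residual characteristic prime to $k$ you only get part (2) of Proposition \ref{nst}: $H$ lies in a compact extension of a split unipotent group, and the compact factor can perfectly well be infinite (e.g.\ $\Z_\ell^\times$), so no finite-index unipotent normal subgroup can be extracted. The paper takes the completion $\E_p$ for a prime $p$ dividing $k$ (which exists since $k>1$ and $\E$ is a number field), and then (5) applies directly.

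For (2) you propose to restrict to a finitely generated subgroup $H_0$ and ``use that $P_k$ surjective passes to completions to conclude $\rho_v(H_0)$ is finite.'' Surjectivity (or density) of $P_k$ does not pass to subgroups, so none of the clauses of Proposition \ref{nst} can be applied to $H_0$; moreover the finiteness clause (6) needs the characteristic to divide $k$, which is not assumed in (2). The paper instead applies part (2) of Proposition \ref{nst} to $H$ itself over a completion $\E_v$ (necessarily of characteristic $p$), placing $H$ inside a compact extension of a split unipotent group; a finitely generated subgroup $N$ of $H$ is then contained in a compact subgroup of ${\rm GL}(d,\E_v)$, from which its finiteness is deduced. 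Your treatments of (3) and (4) essentially match the paper ((6) of Proposition \ref{nst} at a place over $p$, respectively parts (1) and (3) combined with Proposition \ref{F1}), except that in positive characteristic for (4) you should quote (3) with $k=p$ rather than argue through local finiteness of finitely generated subgroups, for the same reason as above.
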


\begin{proof}
Suppose the characteristic of $\E$ is $0$.  Let $E_p$ be the
$p$-adic completion for $p$ dividing $k$.  Then by (5) of
Proposition \ref{nst}, $H$ is contains an unipotent normal subgroup
of finite index.

Suppose the characteristic of $\E$ is $p>0$.  Then any completion
$E_v$ of $\E$ is non-Archimedean of characteristic $p>0$.  By $(2)$
of Proposition \ref{nst}, $H$ is contained in a compact extension of
a split unipotent group in ${\rm GL}(d, \E_v)$.  If $N$ is a finitely
generated subgroup of $H$, then $N$ is contained in a compact
subgroup of ${\rm GL}(d, \E _v)$.  This implies that $N$ is finite.

Suppose the characteristic $p$ of $\E$ divides $k$.  Then by $(6)$
of Proposition \ref{nst}, in any completion of $\E$, $H$ is finite.
Thus, $H$ is finite.

Last part follows from 1, 3 and Proposition \ref{F1}.
\end{proof}

\noindent{\bf Proof of Theorem \ref{lie}:}  Let $d$ be the dimension
of $G$.  Then ${\rm Ad}(G)$ is a subgroup of ${\rm GL}(d , \F)$.  Thus,
Proposition \ref{nst} and its Corollaries \ref{cg}, \ref{infd}.

\section{Proof of Theorems \ref{T2}, \ref{subgroup} and Corollary \ref{ida}}

We first deal the following:

\begin{lemma}\label{q}
Let $\F$ be a non-Archimedean local field and $H$ be a closed subgroup
of ${\rm GL}(d , \F)$.  Suppose the smallest algebraic group over $\F$ containing
$H$ has no $\F$-split unipotent
normal subgroup.  Then
$P_k$ is dense in $H$ if and only if $H$ is compact and $(k, \Ord
(H)) =1$ (if and only if $P_k$ is surjective on $H$).

\end{lemma}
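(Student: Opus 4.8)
The plan is to prove the cycle of implications $P_k$ surjective $\Rightarrow$ $P_k$ dense $\Rightarrow$ ($H$ compact and $(k,\Ord(H))=1$) $\Rightarrow$ $P_k$ surjective. The first implication is trivial, and the last is the easy one: a compact subgroup $H$ of ${\rm GL}(d,\F)$ is closed in a totally disconnected locally compact group, hence profinite, so Proposition \ref{F1} gives surjectivity of $P_k$ precisely when $(k,\Ord(H))=1$. The only substantial content is therefore the middle implication, density $\Rightarrow$ compactness; once $H$ is known to be compact, density forces $P_k(H)$ to be compact and dense, hence equal to $H$, and Proposition \ref{F1} again yields $(k,\Ord(H))=1$.

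To show density forces compactness I would feed $H$ into Proposition \ref{nst}. Parts (1) and (2) give that $H$ is type $R$ and that there are a compact $K\subset {\rm GL}(d,\F)$ and an $\F$-split unipotent $U\subset {\rm GL}(d,\F)$, normalized by $K$ with $K\cap U=\{e\}$, such that $H\subset KU$ and $HU$ is dense in $KU$. Here $U$ is the unipotent radical of the stabilizer $P$ of a flag $V_\bullet$ of $\F^d$, and the projection of $H$ into the Levi factor $M=\prod_i{\rm GL}(V_i/V_{i-1})$ has relatively compact image, namely $K$. Since preserving $V_\bullet$ is a Zariski-closed condition satisfied by $H$, the smallest algebraic $\F$-group $\mathcal H$ containing $H$ also lies in $P$. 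I would then set $N=\mathcal H\cap U$, the kernel of the graded action $\mathcal H\to M$, which is a normal unipotent $\F$-subgroup of $\mathcal H$.

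The reduction I would carry out is: $H$ is compact as soon as $N(\F)$ is compact. Indeed, the image of $H$ under the graded map lands in the compact $K$ and so is bounded, while the fibres of $\mathcal H(\F)\to(\mathcal H/N)(\F)$ are cosets of $N(\F)$; combining boundedness in the $K$-direction with compactness in the $U$-direction shows $H$ is bounded, and being closed it is compact. In characteristic $0$ this is immediate: every unipotent $\F$-group is $\F$-split, so $N$ is an $\F$-split unipotent normal subgroup of $\mathcal H$ and the hypothesis forces $N=\{e\}$; then $\mathcal H\to M$ is a closed immersion, so on $\F$-points the preimage of the compact $K$ is compact and contains the closed set $H$, giving compactness of $H$.

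The hard part is the positive-characteristic case, where a closed $\F$-subgroup of the split unipotent group $U$ need not itself be $\F$-split, so the hypothesis does not directly kill $N$. Here I would argue that ``$\mathcal H$ has no $\F$-split unipotent normal subgroup'' forces $N$ to be $\F$-wound (anisotropic), and hence to have compact group of $\F$-points: a nontrivial $\F$-split direction inside the normal subgroup $N$ should, via the structure theory of unipotent groups over imperfect and local fields, produce a nontrivial $\F$-split unipotent normal subgroup of $\mathcal H$, contradicting the hypothesis. Pinning down the split part of $N$ as a subgroup that is normal in $\mathcal H$, together with the compactness of $\F$-points of wound unipotent groups, is the step where I expect to need the finer structure theory (as in \cite{CGP}), and it is the main obstacle. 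Granting it, $N(\F)$ is compact, the reduction above yields $H$ compact, and the stated equivalences follow.
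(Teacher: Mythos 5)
Your overall route is the same as the paper's: both proofs reduce everything to ``density implies compactness,'' obtain the pair $(K,U)$ with $H\subset KU$ and $HU$ dense in $KU$ from the triviality of the scale function (Lemma \ref{ga} plus Lemma \ref{lus}, which is exactly the content of Proposition \ref{nst}(1),(2) that you invoke), observe that the minimal algebraic $\F$-group $G_1$ containing $H$ preserves the flag (equivalently, normalizes $U$), and then use the hypothesis to control $G_1\cap U$. Your treatment of the trivial and easy implications (surjective $\Rightarrow$ dense, and compact with $(k,\Ord(H))=1$ $\Rightarrow$ surjective via Proposition \ref{F1}) matches the paper, which leaves those implicit.

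The divergence is in the endgame, and it is where your one acknowledged gap sits. The paper argues that $G_1(\F)U$ is closed (an orbit of an algebraic group over a local field), so $KU\subset G_1(\F)U$ by density; it then asserts that the hypothesis forces $G_1(\F)\cap U$ to be trivial, making $g\mapsto gU$ an isomorphism of $G_1(\F)$ onto a closed subgroup of the compact group $KU/U$, whence $H$ is compact. You instead set $N=G_1\cap U$, reduce to showing $N(\F)$ is compact, and in positive characteristic want to show $N$ is wound. The fact you flag as the obstacle is genuinely available in the literature and does close your argument: for a smooth connected unipotent $\F$-group $N$ there is a maximal $\F$-split smooth connected normal subgroup $N_{\rm split}$ which is \emph{characteristic} (stable under all automorphisms, hence normal in $G_1$) with $N/N_{\rm split}$ wound (\cite{CGP}, Theorem B.3.4), and wound unipotent groups over local fields have compact groups of rational points (\cite{Co}). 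The only extra care needed is that the scheme-theoretic intersection $G_1\cap U$ need not be smooth or connected in characteristic $p$, so you should pass to its maximal smooth connected subgroup before applying the dichotomy; similarly your properness claim for $G_1(\F)\to (G_1/N)(\F)$ over the compact image of $H$ should be phrased via the quotient by the compact group $N(\F)$. Modulo these citations your proof is correct, and in positive characteristic it is actually more careful than the paper's: the paper's step ``$G_1$ has no $\F$-split unipotent normal subgroup, hence $G_1(\F)\cap U$ is trivial'' silently uses the same split/wound dichotomy (a nontrivial wound $G_1\cap U$ would not contradict the hypothesis but would make $G_1(\F)\cap U$ a nontrivial compact group), and your version shows why the conclusion ($H$ compact) survives even then.
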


\begin{proof}
Let $s$ be the scale function on $\F ^d$. Applying Lemma
\ref{ga} to the canonical action of $H$ on $\F ^d$, we get that
$s(x) =1$ for all $x\in H$. Now by Lemma \ref{lus}, there exists a
compact group $K\subset {\rm GL}(d,\F)$ and a split unipotent
algebraic group $U\subset {\rm GL}(d,\F)$ normalized by $K$ such
that $K\cap U$ is trivial, $H \subset KU$ and $HU$ is dense in $KU$.
Let $G$ be the normalizer of $U$.  Then $G$ is defined over $\F$ and
contains $H$.  Thus, the smallest algebraic group, say $G_1$, defined over
$\F$ containing $H$ normalizes $U$.  Since $G_1$ and $U$ are algebraic
groups, $G_1(\F )U$ is closed.  This implies that
$KU\subset G_1(\F )U$.  Since $G_1$ has no $\F$-split unipotent normal
subgroup, $G_1(\F )\cap U$ is trivial, hence the map $f\colon G_1(\F )
\to G_1(\F )U/U$ given by $f(g) = gU$ is an isomorphism of topological
groups.  In particular, $f(H)$ is a closed subgroup.  Since
$HU\subset KU$, $f(H)$ is a closed subgroup of $KU/U$.  Therefore
$f(H)$ is compact. Since $f$ is an isomorphisms, $H$ is compact.
\end{proof}



\noindent{\bf Proof of Theorem~\ref{T2}}: Let $N=R_{us,\F}(G)$ and
$Q=G/R_{us,\F}(G)$.

$(a)\Rightarrow (b):$  
Recall 
that $G(\F )/R_{us, \F}(G)(\F)$ is isomorphic to $Q(\F)$ as $H^1(\F, R_{us,\F}(G))=0$.  Since $P_k$ is 
dense in $G(\F )$, $P_k$ is dense
in $G(\F )/R_{us, \F}(G)(\F)$ and hence by Lemma \ref{q}, we get
that $G(\F )/R_{us, \F}(G)(\F)$ is compact.  Compactness of $G(\F
)/R_{us, \F}(G)(\F)$ implies that $P_k$ is surjective on $G(\F
)/R_{us, \F}(G)(\F)$.

We next observe that $(b)\Rightarrow (c)$ follows from Lemma
\ref{cn} and that $(c) \Rightarrow (a)$ is trivial.

\qed

\noindent{\bf Proof of Theorem~\ref{subgroup}}: Suppose that
$P_k:G(\F)\to G(\F)$ is surjective. Let $N=R_{us,\F}(G)$, and $N_H
=R_{us,\F}(H)$.

Assume that the characteristic of $\F$ does not divide $k$.  By
Theorem \ref{T2}, we have $G(\F)/N(\F)$ is compact and $P_k$ is
surjective on $G(\F)/N(\F )$.  Let $\phi \colon G(\F ) \to G(\F)
/N(\F )$ be the canonical quotient.  Then $\phi (H(\F ))$ is a
closed subgroup of $G(\F)/N(\F)$.  By Lemma \ref{profinite}, we have
$P_k$ is surjective on $\phi (H(\F ))$.  Since ${\rm Ker \phi }\cap
H(\F ) = N(\F )\cap H(\F )\subset N_H(\F)$, we get that $H(\F
)/N_H(\F )$ is a quotient of $\phi (H(\F ))$, hence $H(\F )/N_H(\F
)$ is compact and $P_k$ is surjective on $H(\F)/N_H(\F )$.  Now the
result follows from Theorem \ref{T2}.

Suppose the characteristic of $\F$ divides $k$.  Then $G(\F )$ is
finite, hence the result follows from Lemma \ref{profinite}.

Now let $H$ be a closed normal subgroup of $G(\F)$ and $P_k$ is
dense in $H$ as well as in $G(\F )/H$.

If characteristic of $\F$ divides $k$, then by (6) of Proposition
\ref{nst}, $H$ is finite and $(\Ord (H), k) =1$.  Since $H$ is finite,
$G(\F )/H$ is also linear group, hence (6) of Proposition \ref{nst}
implies that $G(\F )/H$ is also finite and $k$ is co-prime to the
order of $G(\F )/H$.  Thus, $G(\F )$ is finite and $|G(\F ) | = |H||G(\F
)/H|$, hence $k$ is co-prime to the order of $G(\F )$.  Thus, $P_k$
is surjective on $G(\F )$.

We may now assume that the characteristic of $\F$ does not divide
$k$.



Let $Q= G/R_{us, \F}(G)$ and $M=G(\F) /R_{us, \F}(G)(\F )$.  Then $M$
is isomorphic to
$Q(\F)$. So we may assume that $M=Q(\F)$.  Let
$M_1$ be the closure of $HR_{us, \F}(G)(\F )/R_{us, \F}(G)(\F )$.
Then $M_1$ is a closed normal subgroup of $M$.  Since $P_k$ is dense
in $H$, $P_k$ is dense in $M_1$.  Since $Q$ has no $\F$-split
unipotent normal subgroup and $M_1$ is a closed subgroup of $M=Q(\F
)$, Lemma \ref{q} implies that $M_1$ is compact.

Let $d\geq 1$ be such that $M(= Q(\F ))$ is a closed subgroup of $GL(d,
\F)$ and $s$ be the scale function on $\F ^d$.  Since $P_k$ is dense
in $G(\F )/H$, $P_k$ is dense on $M/M_1$. Therefore by Lemma
\ref{ga}, $s$ is trivial on $M$.  By Lemma \ref{lus} implies that there
is a compact subgroup $K\subset {\rm GL}(d, \F)$ and a split unipotent
algebraic group $U\subset {\rm GL}(d, \F)$ normalized by $K$ such that
$K\cap U$ is trivial, $M\subset KU$ and $MU$ is dense in
$KU$.  Since $Q$ is an algebraic group and $M=Q(\F )$, $MU$ is closed, hence
$MU=KU$ and since $Q$ has no $\F$-split unipotent normal
subgroup, $M \cap U$ is trivial.  Therefore, $M \simeq
MU/U = KU/U\simeq K$.  Thus, $M$ is compact.
Therefore, $P_k$ is surjective on the compact groups
$M_1$ and $M/M_1$, hence by Lemma \ref{profinite} we get that $P_k$
is surjective on the compact group $M= G(\F )/R_{us, \F}(G)(\F )$.
Now applying Lemma \ref{cn}, we get that $P_k$ is surjective on
$G(\F )$.

\qed

\noindent{\bf Proof of Corollary \ref{ida}}  Since $G(\F )$ is a
closed subgroup of ${\rm GL}(d, \F)$ for some $d \geq 1$, Corollary
\ref{infd} implies that $G(\F )$ is split unipotent.

If $p >0$ is the characteristic of $\F$, then $P_p$ is surjective on
$R_{us,\F}(G)$ implies that $G(\F ) = R_{us,\F}(G)$ is trivial. \qed

\section{Lattices}
We now consider the situation when the group $G$ has a finite
co-volume subgroup or a co-compact subgroup on which $P_k$ is dense.

\begin{proposition}\label{lat1}
Let $G$ be a totally disconnected locally compact group acting on a
totally disconnected locally compact group $X$ by automorphisms.  Suppose $G$ has a finite co-volume or cocompact subgroup $H$
and $P_k$ is dense in $H$.  Then every element of $G$ fixes a compact open subgroup of $X$.
\end{proposition}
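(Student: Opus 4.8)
The plan is to reduce the statement to the triviality of a scale function and then propagate that triviality from $H$ to all of $G$ by a recurrence argument. First I would form the semidirect product $Y=G\ltimes X$ for the given action, exactly as in Lemma~\ref{ga}, and work with the scale function $s$ on $Y$. The properties I will lean on are that $s$ is continuous and $\N$-valued (hence locally constant), that $s(y^{n})=s(y)^{n}$, and that $s$ is invariant under conjugation in $Y$, in particular $s(zgz^{-1})=s(g)$ for $z\in G$. As recorded in the proof of Lemma~\ref{ga}, $s(g)=1=s(g^{-1})$ forces $g$ to normalise a compact open subgroup of $Y$, whose intersection with the normal factor $X$ is a compact open subgroup of $X$ fixed by $g$; so it suffices to prove $s(g)=1=s(g^{-1})$ for every $g\in G$. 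Because $s(g^{m})=s(g)^{m}$ and $s(g)\in\N$, it is in fact enough to exhibit, for each $g$, a single positive power $g^{m}$ with $s(g^{m})=1$ (and symmetrically a negative power).

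The second step treats $H$ itself. Since $P_k$ is dense in $H$ and $s$ is locally constant, the root-extraction argument used in Lemma~\ref{ga} and Proposition~\ref{nst} runs entirely inside $H$: for $h\in H$ the open set $\{x:s(x)=s(h)\}$ meets $P_k(H)$, giving $x_1\in H$ with $s(x_1)^{k}=s(h)$, and iterating yields $s(x_n)^{k^{n}}=s(h)$ for all $n$, so $s(h)$ has a $k$-power root of every order in $\N$ and thus $s(h)=1$; likewise $s(h^{-1})=1$. Hence every element of $H$ is uniscalar, i.e.\ fixes a compact open subgroup of $X$. I would also note, by continuity of the power maps, that $P_{k^{n}}$ is dense in $H$ for every $n$, so that $H\subseteq\overline{P_{k^{n}}(G)}$ and each set $\overline{P_{k^{n}}(G)}$ is closed, inverse-closed and invariant under conjugation by $G$; this conjugation-invariance is what will absorb the conjugating elements appearing below.

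The third step is the passage from $H$ to $G$ via recurrence. Fix $g\in G$. In the finite-covolume case, let $\mu$ be the finite $G$-invariant measure on $\Omega=H\backslash G$; right translation $T=R_g$ preserves $\mu$, and Poincar\'e recurrence produces a coset $Hz$ and times $n_i\to\infty$ with $Hzg^{n_i}\to Hz$, which unwinds to $zg^{n_i}z^{-1}=h_i\epsilon_i$ with $h_i\in H$ and $\epsilon_i\to e$. In the cocompact case I would instead use that $H\backslash G/W$ is finite for a compact open subgroup $W\le G$ and apply the pigeonhole principle to $\{Hg^{n}W\}$, obtaining $m\ge 1$ and a conjugate of $g^{m}$ lying in $H\cdot W$. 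Either way, conjugation-invariance of $s$ gives $s(g)^{n_i}=s(zg^{n_i}z^{-1})=s(h_i\epsilon_i)$ (respectively $s(g)^{m}=s(h_i w_i)$), the first factor uniscalar and the second either tending to $e$ or lying in the fixed compact subgroup $W$.

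The hard part will be the final estimate: showing that such a product of a uniscalar element with a small (or compact-group) element again has scale $1$, so that $s(g)^{n_i}=1$ and hence $s(g)=1$. I would attack this with the common-tidy-subgroup technique already invoked in Lemma~\ref{ga}: by Lemma~4.2 and Corollary~2.7 of \cite{ShW}, a uniscalar element together with a compact subgroup containing the $\epsilon_i$ (or the $w_i$) generates a group on which $s$ is submultiplicative, giving $s(h_i\epsilon_i)\le s(h_i)s(\epsilon_i)=1$. The genuine obstacle is uniformity: the compact open subgroup of $X$ normalised by $h_i$ may degenerate as $h_i$ escapes to infinity, so the smallness of $\epsilon_i$ furnished by recurrence need not suffice for that particular $h_i$. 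I expect to resolve this by exploiting that the $\epsilon_i$ (or $w_i$) lie in a single compact subgroup $W$ whose image normalises a common compact open subgroup $O_W\subseteq X$, or, failing that, by re-expressing the conclusion through the conjugation-invariant sets $\overline{P_{k^{n}}(G)}$ of the second step so as to cancel the escaping conjugator $z$ outright. Everything else is routine scale-function bookkeeping; this uniform control is where the real content sits.
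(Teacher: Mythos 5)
Your first two steps (reduction to triviality of the scale on $X$, and uniscalarity of every element of $H$ via the root-extraction argument of Lemma~\ref{ga}) are correct and coincide with the paper. The gap is exactly where you say it is, and it is a genuine one: the inequality $s(h_i\epsilon_i)\le s(h_i)s(\epsilon_i)$ is \emph{not} a general property of the scale function; submultiplicativity via Corollary~2.7 of \cite{ShW} is only available on a group admitting a common tidy subgroup, and Lemma~4.2 of \cite{ShW} produces one for a pair consisting of an element together with a compact group that it \emph{normalizes}. Your $h_i$ does not normalize the compact set containing the $\epsilon_i$ (and in your cocompact variant the identity $Hg^nW=Hg^mW$ actually yields $g^{n-m}=h\,(g^mwg^{-m})$, so the compact factor lives in the varying conjugate $g^mWg^{-m}$, not in a single $W$). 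Continuity of $s$ only gives $s(h_i\epsilon_i)=s(h_i)$ for $\epsilon_i$ in a neighbourhood of $e$ \emph{depending on $h_i$}, and since the $h_i$ escape to infinity this cannot be made uniform. So the proposal as written does not close.

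The missing idea is to propagate uniscalarity from $H$ to $G$ for the \emph{conjugation action of $G$ on itself} before touching $X$. From the density of $P_k$ in $H$, Lemma~\ref{ga} gives that every element of $H$ normalizes a compact open subgroup of $G$; this also shows the modular function is trivial on $H$, which is how the cocompact case is reduced to the finite co-volume case. Then Theorem~2.5 of \cite{Wi2} (Willis, ``The scale function and lattices'') upgrades this to: every $g\in G$ normalizes a compact open subgroup $V$ of $G$. This $V$ is exactly the uniform object your argument lacks: since $g$ normalizes $V$, Lemma~4.2 of \cite{ShW} applies to the pair $(g,V)$ acting on $X$, giving a common tidy subgroup and hence $s(g^na)=s(g^n)$ for \emph{all} $a\in V$ and all $n$. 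Finite co-volume then forces $g^nV\cap H\neq\emptyset$ for some $n\ge 1$ (your recurrence step, but applied to the $g$-normalized $V$), and picking $h\in g^nV\cap H$ yields $s(g)^n=s(g^n)=s(h)=1$. In short: your recurrence replaces the paper's appeal to \cite{Wi2}, but without a $g$-normalized compact open subgroup the final scale estimate cannot be justified, and that external theorem (or an equivalent substitute) is needed.
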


\begin{proof}
Since $P_k$ is dense in $H$,
by Lemma \ref{ga} applied to the conjugate action of $H$ of $G$, we get that every element of $H$ normalizes a compact open subgroup of $G$.  This implies that the modular function of $G$ is trivial on $H$ and $H$
is unimodular.  If $G/H$ is compact, we get that $G$ is unimodular.  Thus, $G/H$ has an invariant measure.
Since $G/H$ is compact, $G/H$ has finite volume.  Thus, we may assume that $H$ has finite co-volume.

Since every element of $H$ normalizes a compact open subgroup of
$G$, by Theorem 2.5 of \cite{Wi2}, every element of $G$ also
normalizes a compact open subgroup of $G$.

Let $g\in G$ and $V$ be a compact open subgroup of $G$ normalized by
$g$.  Then there exists a $n \geq 1$ such that $g^nV\cap H \not = \emptyset$.  Let $h \in H$ be such that $h\in g^nV$.

Let $s$ be the scale function on $X$.  Then by Lemma 4.2 of \cite{ShW}, the group generated
by $g$ and $V$ has a common tidy subgroup in $X$. This implies by
Corollary 2.7 of \cite{ShW} that $s(ab)\leq s(a)s(b)$ for all $a$
and $b$ in the group generated by $g$ and $V$ (see also Proposition
7.2 of \cite{GlW}).  Since $s(a)=1$ for all $a\in V$, we have
$$s(g^na)\leq s(g^n)\leq s(g^na)s(a^{-1})=s(g^na).$$  Therefore, $s(g^na)=s(g^n)$ for all $a\in V$.  Since $h\in g^n V$, $s(h)=s(g^n)$.
It now follows from Lemma \ref{ga} that
$1=s(h)=s(g^n) =s(g)^n$, hence $s(g)=1$.

\end{proof}

It can easily be seen that we can't expect $P_k$ to be dense in $G$
if $P_k$ is dense in a co-compact or a finite co-volume subgroup
$H$. For example, take $H$ to be the trivial subgroup of a compact
group which forces us to ask is this the only obstruction.  In case
$G$ is a group of $\F$-points of an algebraic group defined over
$\F$, we have the following affirmative answer.

\begin{proposition}\label{lat2}
Let $G$ be an algebraic group defined over a non-Archimedean local
field $\F$ and $H$ be a closed subgroup of $G(\F )$ with finite
co-volume or co-compact.  Suppose $P_k$ is dense in $H$.  Then we
have the following:

\begin{enumerate}

\item[(1)] $G(\F )$ is a compact extension of $R_{us, \F}(G)(\F )$.

\item[(2)] If the residual characteristic of $\F$ does not divide $k$,
then $G(\F )$ contains an open subgroup $G_0$ of finite index such
that $P_k$ is surjective on $G_0$ and $G_0$ contains $H$.

\item[(3)] If the residual characteristic of $\F$ divides $k$, then the
characteristic of $\F$ is zero implies that $H$ is a finite
extension of $R_{us, \F}(G)(\F )$ and the characteristic of $\F$ is
positive implies that $G(\F )$ is compact.
\end{enumerate}

\item

\end{proposition}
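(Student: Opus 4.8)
The plan is to leverage Proposition~\ref{lat1} to transfer the dynamical consequence of density on $H$ to all of $G(\F)$, then combine this with Lemma~\ref{q}, Lemma~\ref{lus}, and the structure theory of the quasi-reductive quotient $Q=G/R_{us,\F}(G)$ to extract compactness and surjectivity. First I would apply Proposition~\ref{lat1} to the conjugation action of $G(\F)$ on itself (viewed as a totally disconnected locally compact group via its embedding in $\mathrm{GL}(d,\F)$), which gives that every element of $G(\F)$ normalizes a compact open subgroup, i.e.\ the scale function is trivial on $G(\F)$. Equivalently, applying Proposition~\ref{lat1} to the canonical linear action of $G(\F)$ on $\F^d$, every element of $G(\F)$ fixes a compact open subgroup of $\F^d$, so by the reasoning in Lemma~\ref{lus} the scale function $s$ on $\F^d$ is trivial on $G(\F)$ and $G(\F)$ is type $R$.

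For part~(1), having $s\equiv 1$ on $G(\F)$ lets me invoke Lemma~\ref{lus} to produce a compact group $K\subset\mathrm{GL}(d,\F)$ and an $\F$-split unipotent $U$ normalized by $K$ with $K\cap U$ trivial, $G(\F)\subset KU$, and $G(\F)U$ dense in $KU$. The key point, exactly as in the proof of Lemma~\ref{q} and the proof of Theorem~\ref{subgroup}, is that $U$ is normalized by the smallest algebraic $\F$-group containing $G(\F)$, so $G(\F)\cap U$ is $\F$-split unipotent normal and must therefore be contained in $R_{us,\F}(G)(\F)$; passing to the quotient $Q(\F)\cong G(\F)/R_{us,\F}(G)(\F)$, which has trivial $\F$-split unipotent radical, the image embeds as a closed subgroup of $KU/U\cong K$ and is hence compact. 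This gives (1): $G(\F)$ is a compact extension of $R_{us,\F}(G)(\F)$.

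For part~(2), suppose the residual characteristic does not divide $k$. The quotient $Q(\F)$ is compact by (1), but $P_k$ need not be surjective on it since $H$ may sit inside a proper part (the trivial-subgroup obstruction flagged before the statement). The plan is to pass to the closure $M_1$ of the image of $H$ in $Q(\F)$: by Proposition~\ref{lat1} and density of $P_k$ on $H$, together with Lemma~\ref{q}, $M_1$ is compact, and $P_k$ is surjective on $M_1$. Since $Q(\F)$ is a compact (hence profinite modulo the identity component, but here totally disconnected so profinite) group, I can choose an open finite-index subgroup of $Q(\F)$ on which $P_k$ is surjective and which contains $M_1$; pulling this back through $G(\F)\to Q(\F)$ and using Lemma~\ref{cn} (since $R_{us,\F}(G)$ is $\F$-nilpotent with $\F$-linear conjugation action and $k$ is coprime to the characteristic) yields an open finite-index $G_0$ with $P_k$ surjective on $G_0$ and $H\subset G_0$. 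Part~(3) follows the pattern of parts (4)--(6) of Proposition~\ref{nst}: when the residual characteristic divides $k$, the compact group $K$ has an open pro-$p$ subgroup on which $P_k$ cannot be surjective unless it is trivial, forcing $K$ (hence $Q(\F)$) finite; in characteristic zero this makes $H$ a finite extension of $R_{us,\F}(G)(\F)$, and in positive characteristic $U$ is killed by $P_p$ so $G(\F)$ is compact.

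**The hard part will be** part~(2), specifically the bookkeeping that guarantees the finite-index open subgroup $G_0$ of $G(\F)$ can be chosen to contain $H$ \emph{and} to have $P_k$ surjective simultaneously: one must match the open subgroup of $Q(\F)$ on which $P_k$ is surjective (governed by coprimality of $k$ to the order of $Q(\F)/$(open subgroup), via Proposition~\ref{F1} and Lemma~\ref{ord}) against the constraint that its preimage contains $H$, which requires the image $M_1$ of $H$ to lie inside that open subgroup. Since $M_1$ is compact and $P_k$ is already surjective on $M_1$, coprimality of $k$ to $\Ord(M_1)$ holds, so the obstruction is purely whether the ambient $Q(\F)$ has the right arithmetic; the delicate step is verifying that a single $G_0$ works and that Lemma~\ref{cn} applies to the unipotent kernel to lift surjectivity from $G_0/(R_{us,\F}(G)(\F))$ back up to $G_0$ itself.
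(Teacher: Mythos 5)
Your overall strategy matches the paper's: Proposition~\ref{lat1} trivializes the scale function on the image of $Q(\F)\simeq G(\F)/R_{us,\F}(G)(\F)$ in ${\rm GL}(d,\F)$, Lemma~\ref{lus} supplies $K$ and $U$, and the absence of a split unipotent normal subgroup in $Q$ makes $Q(\F)$ a closed subgroup of $K$, giving part~(1). However, there are two genuine gaps. In part~(2) you assert, without a construction, that $Q(\F)$ has an open finite-index subgroup containing $M_1$ on which $P_k$ is surjective; you correctly flag this as the hard step but do not resolve it, and coprimality of $k$ to $\Ord(M_1)$ alone does not produce such a subgroup. The missing idea is short but essential: the compact linear group $Q(\F)$ has an open normal pro-$p$ subgroup $M_0$; since $p\nmid k$, Proposition~\ref{F1} makes $P_k$ surjective on $M_0$, and then $M_1M_0$ is open of finite index, contains the image of $H$, and $P_k$ is surjective on it by Lemma~\ref{profinite} (as $(M_1M_0)/M_0$ is a quotient of $M_1$). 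Its preimage in $G(\F)$ is the required $G_0$, and Lemma~\ref{cn} lifts surjectivity as you indicate.

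In part~(3) your intermediate claim that $K$, hence $Q(\F)$, is finite is unjustified and in general false: $P_k$ is dense only in $H$, not in $G(\F)$, so the pro-$p$ argument cannot be run on all of $Q(\F)$ (take $G$ an anisotropic torus, $H$ trivial, $k=p$: then $Q(\F)=G(\F)$ is infinite and compact, yet the hypotheses hold). The correct route is to apply Proposition~\ref{nst}(5)/(6) to $H$ itself: in characteristic zero $\overline{H}$ is a finite extension of a split unipotent algebraic group $V$; the finite co-volume/co-compact hypothesis together with compactness of $Q(\F)$ forces $V\subset R_{us,\F}(G)(\F)$, and then $V=R_{us,\F}(G)(\F)$ because $V$ has finite co-volume there. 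In positive characteristic $H$ is finite, so $G(\F)$ has finite Haar measure and is therefore compact; the observation that $P_p$ kills $U$ does not by itself yield compactness of $G(\F)$.
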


\begin{proof} Let $M=G(\F ) /R_{us, \F}(G)(\F)$ and
$Q=G/R_{us, \F}(G)$.  Then $M\simeq Q(\F )$, hence we may assume that
$M=Q(\F )$.  Then $M$ is a
closed subgroup of ${\rm GL}(d, \F)$ for some $d\geq 1$.  Let $s$ be the
scale function on $\F ^d$. Then by Proposition \ref{lat1} $s$ is
trivial on $M$.  Now applying Lemma \ref{lus} to $M$, we get that there is
a compact group $K\subset {\rm GL}(d, \F)$ and a split unipotent group
$U\subset {\rm GL}(d, \F)$ normalized by $K$ such that $M\subset KU$
and $MU$ is dense in $KU$.  Since both $Q$ and $U$ are
algebraic, $MU$ is closed, hence $MU = KU$.

Let $f\colon KU\to KU/U$ be the canonical quotient map.  Since $Q$
has no $\F$-split unipotent normal subgroup, $M\cap U$ is
trivial.  Therefore $f$ restricted to $M$ is an isomorphism
onto $KU/U\simeq K$.  Thus, $M$ is compact.

Suppose the residual characteristic $p$ of $\F$ does not divide $k$.
Let $M_0$ be an open normal pro-$p$ subgroup of $M$. Then by
Proposition \ref{F1}, we get that $P_k$ is surjective on $M_0$.  Let
$M_1$ be the closure of $(HR_{us, \F}(G)(\F )/R_{us, \F}(G)(\F ))
M_0$. Then $M_1$ is an open subgroup of finite index in $M$ and by
Lemma \ref{profinite}, $P_k$ is surjective on $M_1$.  Let $G_0$ be
the subgroup of $G(\F )$ containing $R_{us, \F}(G)(\F )$ such that
$G_0/R_{us, \F}(G)(\F )= M_1$.  Then $G_0$ is an open subgroup of
finite index in $G(\F )$. Since the residual characteristic $p$ of
$\F$ does not divide $k$ and  $G_0/R_{us, \F}(G)(\F )$ is compact on
which $P_k$ is surjective, Lemma \ref{cn} implies that $P_k$ is
surjective on $G_0$.

If the residual characteristic $p$ of $\F$ divides $k$ and the
characteristic of $\F$ is zero, then by (5) of Proposition \ref{nst}
we get that $H$ is a finite extension of a split unipotent algebraic
group $V$.  Since $H$ is a finite co-volume or co-compact subgroup
of $G(\F )$, $V$ is also finite co-volume or co-compact subgroup of
$G(\F )$. Since $G(\F )/R_{us, \F}(G)(\F )$ is compact, $V\subset
R_{us, \F}(G) (\F )$.  In particular, $V$ is a finite co-volume
subgroup of $R_{us, \F}(G)(\F )$.  Now, it can be easily shown that
$V= R_{us, \F}(G)(\F)$.  Thus, $H$ is a finite extension of $R_{us,
\F}(G)(\F )$.

If the residual characteristic $p$ of $\F$ divides $k$ and the
characteristic of $\F$ is positive, then by (6) of Proposition
\ref{nst}, $H$ is finite.  This implies that $G(\F )$ itself has
finite volume, hence $G(\F )$ is compact.

\end{proof}

\smallskip
\noindent{\bf Acknowledgements}
We would like to thank Prof. B. Conrad for some helpful suggestions.

\bigskip

\begin{flushleft}
	\begin{flushleft}
		Arunava Mandal and C.R.E. Raja\\
		Theoretical Statistics and Mathematics Unit,\\
		Indian Statistical Institute, Bangalore Centre,\\
		Bengaluru 560059, India.

		E-mail: {\tt a.arunavamandal@gmail.com} and {\tt creraja@isibang.ac.in}
	\end{flushleft}
\end{flushleft}
\end{document}